\newcommand{\eps}{\ensuremath{\varepsilon}}
\newtheorem{theorem}{Theorem}
\newtheorem{lemma}{Lemma}
\newtheorem{prop}{Proposition}
\newtheorem{conjecture}{Conjecture}[section]
\begin{document}

\title{Full rainbow matchings in equivalence relations}

\author{David Munh\'a Correia\thanks{St. Hugh's College, University of Oxford, UK,
e-mail:david.munhacanascorreia@st-hughs.ox.ac.uk} \newline \hspace*{1.5em}
\and Liana Yepremyan\thanks{ Department of Mathematics, Statistics, and Computer Science, University of Illinois at Chicago, Chicago, USA,
London School of Economics, Department of Mathematics, London, UK,
e-mail:lyepre2@uic.edu, l.yepremyan@lse.ac.uk}}





\maketitle

\begin{abstract}
We show that if a multigraph $G$  with  maximum edge-multiplicity of at most $\frac{\sqrt{n}}{\log^2 n}$, is edge-coloured by $n$ colours such that each colour class  is a disjoint union of cliques with at least $2n + o(n)$ vertices, then it has a full rainbow matching, that is, a matching where each colour appears exactly once. This   asymptotically solves a question raised by Clemens, Ehrenm\"uller and Pokrovskiy, and   is related to  problems on algebras of sets studied by Grinblat in [Grinblat 2002].
\end{abstract}
\section{Introduction}

This paper is motivated by  a question  of Grinblat which demonstrates a beautiful interplay between measure theory and  combinatorics. Recall that an algebra \(A\) on a set \(X\) is a family of subsets of this set closed under the operations of union and difference of two subsets. In his book~\cite{grinblat2002} and also later~\cite{grinblat2004,grinblat2015}  Grinblat investigated necessary and sufficient conditions under which the union of at most countably many algebras on $X$ equals to $P(X)$, the power set of $X$. In particular, one of the   questions he studied,  as observed by Nivasch and Omri~\cite{NO}, can be phrased about equivalence relations  as follows.   Let $X$ be a finite set and let A be an equivalence relation on $X$, define the \emph{kernel} of $A$, $ker(A)$ to be the set of all elements in $X$, which have non-trivial equivalence classes. Define $\nu(n)$ to  be the minimal number such that if $A_1, \dots , A_n$ are equivalence relations with $ker(A_i)\geq \nu(n)$ for all $i\in [n]$, then $A_1 \dots,  A_n$ contains a \emph{rainbow matching}, that is,  a set of $2n$ distinct elements $x_1$, $y_1$, . . . ,  $x_n$, $y_n\in X$ with $ x_i\sim y_i \in A_i$ for each $i\in [n]$.

In~\cite{grinblat2002} Grinblat showed  that $ 3n-2\leq  \nu (n) \leq 10n/3 + \sqrt{2n/3} $,  and he asked whether the lower bound is the correct answer for all $n\geq 4$. Nivasch and Omri~\cite{NO} improved Grinblat's upper bound  on $\nu(n)$  to $16n/5 + O(1)$. Finally, Clemens, Ehrenm{\"u}ller,  Pokrovskiy~\cite{CEP} improved the bound on $\nu(n)$ to asymptotically best possible, that is,  $\nu(n) = (3+o(1))n$, using  the graph theoretic language. 

If $A_1, \dots, A_n$ are equivalence relations on a set $X$, let the vertices of an edge-coloured multigraph be the elements of $X$ and, for each $i\in[n]$, let $ {x, y}$ be an edge of colour $i$ if $x\sim_{A_i}y$. Each equivalence relation $A_i$ then corresponds to the colour class $i$ in the multigraph $G$  and, each colour class is a disjoint union of non-trivial cliques.  So, Grinblat's original question can be reformulated as follows: we are given  a multigraph $G$ whose  edges  are coloured with $n$ colours and each subgraph induced by a colour class has at least $3n-2$  vertices and is the disjoint union of non-trivial complete graphs. Is it true that then $G$ contains a \emph{ full rainbow matching}, i.e. a set of $n$ disjoint edges, which all have distinct colours? The authors in~\cite{CEP}  showed that  for sufficiently large $n$, if each colour class has at least $(3+o(1))n$ many vertices then such a rainbow matching exists.  Note that this is asymptotically the best bound, as  if we take a disjoint union of $n-1$ triangles, each edge with multiplicity $n$, one edge per colour,  then there is no rainbow matching of size $n$. If $n = 3$, then  $\nu(3)=9>3n-2$, demonstrated by a $3$-factorization of two disjoint $K_4$'s, as shown by Grinblat~\cite{grinblat2002} and also observed by Nivasch and Omri~\cite{NO}.


In~\cite{CEP} the authors proposed to study Grinblat's original problem when  every pair of distinct elements belongs to at most one equivalence relation (Problem 2, in~\cite{CEP}), that is, to determine $\nu'(n)$ such that  if $A_1, A_2,  \dots, A_n$ are equivalence relations with $ker(A_i) \geq \nu'(n)$ and $A_i\cap A_j\subseteq \{(x,x)| x \in X\}$ then $A_1, A_1, \dots, A_n$ contain a rainbow matching. In the graph theoretic language this is equivalent to finding the minimum $\nu'(n)$ such that every simple graph $G$ whose  edges  are coloured with $n$ colours, each subgraph induced by a colour class has at least $\nu'(n)$  vertices and is the disjoint union of non-trivial complete graphs, contains a full rainbow matching.
 The trivial upper bound is $ \nu'(n)\leq \nu(n)\leq (3+o(1)n$. As for the lower bound, the graph composed of  $n-1$ disjoint stars with $n$ edges, one edge per colour, exhibits that $\nu'(n)>2n-2$~\cite{P}.  In this paper we show that asymptotically $2n$ is the correct answer, in fact, the same result holds for multigraphs with bounded  edge multiplicity. Our main result follows.

\begin{theorem}\label{thm:main}
For every  $\delta > 0$ there exists $n_0$ such that for all $n\geq n_0$ the following holds. If $G$ is a multigraph whose edges are coloured with $n$ colours, such that each colour class is a disjoint union of non-trivial cliques with at least $(2 + \delta)n $ vertices, and the edge-multiplicity of $G$ is at most $\frac{\sqrt{n}}{\log^2 n}$ then $G$ contains a full rainbow matching. 
\end{theorem}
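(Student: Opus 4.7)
My plan is to combine the now-standard ``semi-random / nibble plus absorption'' framework. Recast the problem as a matching question in the $3$-uniform auxiliary hypergraph $\mathcal{H}$ with vertex set $V(G)\cup[n]$ and a hyperedge $\{u,v,c\}$ for every colour-$c$ edge $\{u,v\}$ in $G$; a full rainbow matching is a matching of $\mathcal{H}$ saturating $[n]$. The $\delta n$-slack in each colour's vertex count, and the multiplicity bound $\sqrt n/\log^2 n$ (which caps the codegree in $\mathcal{H}$ between any two vertices of $V(G)$), are the natural parameters to feed into these tools.

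I would proceed in three stages. First, reserve an absorbing sub-structure by, for each colour $i$, keeping each of its cliques with a small probability; this yields random sub-colour-classes $R_i$ whose structure is rich enough that any $o(n)$-sized subset $I$ of ``leftover'' colours can later be matched rainbow-wise inside $\bigcup_{i\in I}R_i$ without disturbing the rest of the matching. Second, run a batched random-greedy nibble on the non-reserved part: in each batch, each still-unmatched colour proposes a random edge by picking a uniform free vertex of $V_i$ and then a uniform free clique-mate, and one adds to the matching those proposals that are internally conflict-free. After $\mathrm{polylog}(n)$ batches, martingale concentration (Azuma/Talagrand) should show that at most $o(n)$ colours remain unmatched. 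Third, invoke the absorbers of the first step to slot in the leftover colours, yielding a full rainbow matching.

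The crux will be the nibble analysis. In the tight regime $|V(G)|\approx(2+\delta)n$ every pair of colour vertex-sets overlaps in nearly all their vertices, so a one-shot random choice produces $\Theta(n)$ expected collisions; the analysis must be genuinely sequential in many small batches, and at the same time control the ``available edges in colour $i$'' random variables. These are only mildly Lipschitz in the worst case, because a single clique in a single colour can contain up to $\Theta(n)$ vertices and so adding one vertex to the matching may destroy up to $|C|\cdot (\sqrt n/\log^2 n)$ candidate edges of that colour. Overcoming this likely requires bucketing cliques by size, treating very large cliques separately by a direct greedy argument, and using the multiplicity bound $\sqrt n/\log^2 n$ to calibrate the concentration error so that the total loss across all batches is only $o(n)$. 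The specific exponent of $\log$ in $\sqrt n/\log^2 n$ is presumably tuned precisely to make these Talagrand-type estimates go through without losing more colours than the absorbers can catch.
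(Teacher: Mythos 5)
Your high-level instinct (semi-random nibble plus concentration, multiplicity bound controlling codegrees in the auxiliary $3$-uniform hypergraph) is in the right neighbourhood, but the plan as written has a gap and misses the two reductions that make the paper's proof manageable. First, you worry at length about large cliques making the ``available edges of colour $i$'' random variable badly Lipschitz and propose bucketing by clique size; the paper sidesteps this entirely with a one-line reduction at the very start: inside each monochromatic $K_t$ with $t\geq 4$, delete edges so that it becomes a disjoint union of $K_3$'s and $K_2$'s. This preserves $n_c$, so one may assume from the outset that every colour class is a union of triangles and edges, after which one more random sparsification (each monochromatic $K_3$ is kept whole with probability $1/4$ or reduced to a random $K_2$) gives degree bounds $d_v\leq(1+\delta/8)n$ and edge bounds $e_c\geq(1+\delta/4)n$. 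All the Lipschitz constants in the later concentration arguments become $O(1)$ or $O(\mu)$ because cliques have size at most $3$.

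Second, the paper does \emph{not} use absorption at all. It randomly orders the colours into $\Theta(\log\log n)$ chunks of size $\eps n$ with $\eps\asymp 1/\log\log n$, processes each chunk by choosing one random edge per colour, deleting the non-colliding ones, ``zapping'' extra vertices to equalise deletion probabilities, and greedily handling the $O(\eps^2 n)$ colliding colours; a differential-equation analysis (with $d(x)=1-\gamma x$, $e_c(x)=e_c(0)d(x)^2$) shows that enough edges of every colour survive that the \emph{last chunk itself} can be matched greedily. Your absorption proposal, by contrast, reserves random sub-cliques $R_i$ per colour and hopes to slot leftover colours into them; the difficulty is that in this problem the colour classes share vertices heavily, so the vertices of the reserved $R_i$ will generically be consumed by the nibble acting on \emph{other} colours. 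Making that reservation ``vertex-disjoint from the nibble'' while still leaving each colour class large enough to nibble on would require a separate argument you have not supplied. The paper's route of running the nibble with tight quantitative control all the way to a final small greedy chunk avoids this issue, and in this sense is both simpler and closer to being a complete proof than the absorption plan you sketch.
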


We suspect that improving this upper bound on $\nu'(n)$ to something not of  an asymptotic  form  will be  hard. That is  because our problem is closely related to the Brualdi-Ryser-Stein conjecture on Latin squares and its generalizations.  

\subsection{The Brualdi-Ryser-Stein conjecture}
A Latin square of order $n$ is an $n\times n$ square 
with cells filled using $n$ symbols so that every symbol appears once in each row and once in each column.  \emph{A partial transversal} of size $k$  of a Latin square is a set of $k$ entries in the square which all come from distinct rows and columns, and contain distinct symbols. If $k=n$, where $n$ is the order of the Latin square, the partial transversal is simply called a \emph{transversal}. 

\begin{conjecture}[Brualdi-Ryser-Stein]\label{BRS}
Every Latin square of order $n$ has a partial transversal of size $n-1$ and moreover, if $n$ is odd, it has a transversal.
\end{conjecture}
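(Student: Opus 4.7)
The plan is to recast each Latin square $L$ of order $n$ as a proper $n$-edge-colouring of $K_{n,n}$ (each colour class being the perfect matching $\{(r,c):L(r,c)=s\}$ for a symbol $s$), and equivalently as a $3$-partite $3$-uniform hypergraph $H_L$ on vertex parts $R,C,S$ of size $n$ with hyperedges $\{r,c,L(r,c)\}$. Under this dictionary a partial transversal of size $k$ is a rainbow matching of size $k$, or a matching of size $k$ in $H_L$, and a transversal is a rainbow perfect matching. The Latin condition translates to every vertex of $H_L$ having degree exactly $n$ and the link of every pair from two different parts being a single vertex in the third.

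For the first assertion (a partial transversal of size $n-1$) I would combine the semi-random nibble method with absorption, in the spirit of Hatami--Shor and its recent strengthenings by Keevash--Pokrovskiy--Sudakov--Yepremyan. First I would build an \emph{absorber} $A \subseteq H_L$ of polylogarithmic size with the property that, for every small ``defect'' configuration $(R',C',S') \subseteq R\times C\times S$ with $|R'|=|C'|=|S'|$ below some polylog threshold, the subhypergraph spanned by $V(A)\cup R'\cup C'\cup S'$ admits a perfect matching. The natural gadgets are intercalates (order-$2$ Latin subsquares) and short rerouting paths: for each candidate defect $(r,c,s)$ with $L(r,c)\neq s$ one wants $\Omega(n^2)$ disjoint insertion paths through $s$, a count that follows by double-counting using the Latin property. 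Second, on $H_L\setminus A$ I would run a semi-random rainbow matching process to leave only $n^{1-c}$ unsaturated vertices per part; because the residual sets automatically have equal size and quasi-uniform distribution, the absorber swallows them and produces a rainbow matching of size $n-1$.

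For the odd case I would try to upgrade the partial transversal to a full one by a sign/parity argument. The cleanest route is Alon--Tarsi: the signed transversal count $\sum_T \mathrm{sgn}(\sigma_T)$, where $\sigma_T$ is the row-to-column permutation induced by $T$, is conjectured to be nonzero whenever $n$ is odd, which would immediately yield a transversal. Failing an unconditional proof of Alon--Tarsi, the combinatorial alternative is a switching argument starting from a partial transversal of size $n-1$ missing a unique triple $(r_0,c_0,s_0)$: one seeks an alternating rainbow cycle in $H_L$ that inserts $(r_0,c_0,s_0)$ at the cost of a chain of colour swaps, and the parity of $n$ forces the requisite cycle to close.

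The main obstacle is clear on both fronts. For the partial transversal of size $n-1$, the gap from the current best bound of $n - O(\log n/\log\log n)$ down to $n-1$ demands an absorber that is robust against \emph{every} adversarial Latin square, and constructing a deterministic absorber with this worst-case guarantee (rather than one that works for typical or random $L$) has been the sticking point in every prior attempt. For the odd case, the Alon--Tarsi conjecture itself remains open in general (verified only for $n\leq 27$ and for $n=p\pm 1$ with $p$ an odd prime), so any unconditional proof must bypass it with a switching scheme that transports the global parity constraint imposed by the single missing row, column and symbol into a local, verifiable alternating structure; this global-to-local passage is where I expect the hardest work to lie.
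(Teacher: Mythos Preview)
The statement you are attempting to prove is the Brualdi--Ryser--Stein \emph{conjecture}; the paper states it as an open problem and does not offer a proof. Indeed, the paper explicitly records that the best known bound is only $n - O(\log^2 n)$ due to Hatami and Shor. So there is no ``paper's own proof'' to compare against.

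Your proposal is not a proof but a research programme, and you yourself locate the genuine gaps. For the $n-1$ part, the entire difficulty is concentrated in the sentence ``constructing a deterministic absorber with this worst-case guarantee\ldots has been the sticking point in every prior attempt'': you have not indicated any new idea for building such an absorber, and the nibble-plus-absorption outline you sketch is precisely the framework that currently stalls at $n - O(\log n/\log\log n)$ (Keevash--Pokrovskiy--Sudakov--Yepremyan), not at $n-1$. For the odd case, you reduce to the Alon--Tarsi conjecture, which is itself open, and your fallback ``switching argument'' is stated only as a hope that ``the parity of $n$ forces the requisite cycle to close'' without any mechanism explaining why oddness should guarantee closure. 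Both branches therefore rest on unresolved conjectures or unspecified arguments; as written, the proposal does not advance beyond the known state of the problem.
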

The history of the conjecture is as follows. In 1967, Ryser~\cite{Ryser} conjectured that the number of transversals in a Latin square of order $n$ has the same parity as $n$, so any Latin square of odd order has a transversal (see also~\cite{best2018did}).  Note that for even $n$ this is not true; for  example, the addition table of  $\mathbb{Z}_{2n}$ is a Latin square with no transversal. Brualdi~\cite{brualdi} conjectured that every Latin square of order $n$ has a partial transversal of size $n-1$ and moreover, if $n$ is odd, it has a transversal. Stein~\cite{S} conjectured that a stronger statement holds and the same outcome should hold even in an $n\times n$ array filled with the numbers $1,2, \dots, n$  such that every number occuring exactly $n$ times. Very recently this was disproved by Pokrovskiy and Sudakov~\cite{PS}. The current best bound on the size of the partial transversal in Brualdi-Ryser-Stein conjecture is $n-O(\log^2n)$ proved by Hatami and Shor~\cite{HS}.  

To every Latin square one can assign an edge-colouring of the complete bipartite graph $K_{n,n}$ by colouring the edge $ij$ by the symbol in the cell $(i, j)$. This is a proper colouring, i.e., one in which any edges which share a vertex have distinct colours. Identifying the cell $(i, j)$ with the edge $ij$, a partial transversal corresponds to a rainbow matching of the same size. So Conjecture~\ref{BRS}  says that any proper edge-colouring of $K_{n,n}$ contains a rainbow matching of size $n-1$, and a perfect rainbow matching, when $n$ is odd. Aharoni and Berger conjectured  the following generalization of  Conjecture~\ref{BRS}.

 \begin{conjecture}\label{conj:aharoniberger}Let $G$ be a bipartite multigraph that is properly edge-coloured
with $n$ colours and has at least $n + 1$ edges of each colour. Then $G$ has a rainbow matching using every colour.
\end{conjecture}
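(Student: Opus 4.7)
The plan is to follow the familiar \emph{random greedy + switching + absorbers} template that has driven progress on the closely related Brualdi--Ryser--Stein conjecture and on the approximate $n+o(n)$ version of this very conjecture due to Pokrovskiy; the hope is to sharpen that template all the way down to the tight threshold $n+1$.

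First, I would build a rainbow matching $M_0$ missing only $k = o(n)$ colours by a semi-random greedy process: order the colours uniformly at random, and at each step pick a uniformly random edge of the current colour that is still ``available''. Standard concentration (the differential equation method, or a Talagrand-style bounded-differences argument) should yield $|M_0| \geq n - o(n)$ with high probability, since the $n+1$ edges per colour guarantee that many candidate edges survive each step. Second, to close the gap, I would exploit the slack in the hypothesis: the $n+1$ edges of any colour $c$ form a matching (properness forces this, even in the multigraph setting), and at most $n-1$ of them can be blocked by the edges currently in the rainbow matching. This two-unit surplus defines an auxiliary ``colour-swap'' digraph on $[n]$ with out-degree at least $2$, in which an in-branching rooted at a used colour and reaching a missing colour encodes an augmenting sequence of swaps. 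Third, to make this augmentation robust at the very end, I would prepare, via a standard probabilistic reservation (in the style of Montgomery--Pokrovskiy--Sudakov), a small rainbow absorber $R \subseteq G$ with the property that any $o(n)$-sized set of missing colours can be re-inserted by local rearrangements inside $R$.

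The main obstacle is precisely the tightness of the bound $n+1$. With only two units of slack beyond $n-1$, the colour-swap digraph can be almost saturated, and adversarial sunflower-like configurations (colour classes whose edges cluster on a small set of shared endpoints) can make every candidate augmenting branching stall; this is essentially why the conjecture is still open in its sharp form, even though the $n+o(n)$ version is known. Even with absorbers, pushing below $n + o(n)$ down to exactly $n+1$ seems to require a genuinely new idea, perhaps a parity argument in the spirit of Alon's work on transversals in Latin squares of odd order, or a Hall-type theorem for hypergraph matchings that uses properness directly rather than merely as a per-colour matching condition. I would therefore first attempt the full conjecture on structured subclasses, for example regular bipartite graphs or graphs whose colour classes are near-perfect matchings, where extra symmetry kills the worst sunflower obstructions, and then search for an amplification or inductive step that reduces the general case to the structured one.
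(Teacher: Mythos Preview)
This statement is a \emph{conjecture} in the paper, not a theorem; the paper does not prove it, and indeed the surrounding text makes clear that only the asymptotic $n+o(n)$ version is known (via Pokrovskiy). So there is no ``paper's own proof'' to compare against.

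Your proposal is not a proof either, and you acknowledge this yourself: the random greedy plus switching plus absorbers template you describe is exactly the machinery behind the known $n+o(n)$ result, and you correctly identify that pushing it to the sharp threshold $n+1$ fails because two units of slack are not enough to guarantee augmenting structures in the colour-swap digraph. The concrete gap is that your second and third steps both implicitly need $\omega(1)$ surplus per colour to make the branching or absorber arguments go through with positive probability, and $n+1$ gives only constant surplus. Your closing paragraph is an honest assessment that the plan runs out of steam precisely where the open problem begins; what you have written is a reasonable research outline for attacking the conjecture, but it is not a proof and should not be presented as one.
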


Note that Conjecture~\ref{conj:aharoniberger} would imply that $\nu''(n)\leq 2n+2$ where $\nu''(n)$ can be be defined just as $\nu'(n)$ but restricted to only bipartite graphs (however, Conjecture~\ref{conj:aharoniberger} is much stronger, because it allows any edge multiplicity). Pokrovskiy~\cite{P} showed that Conjecture~\ref{conj:aharoniberger} is asymptotically true, in that the conclusion
holds if there are at least $n + o(n)$ edges of each colour.  Keevash and Yepremyan~\cite{KY1} considered the same
question without the bipartiteness assumption and obtained a result somewhat analogous to
Pokrovskiy's. They showed that any multigraph with edge multiplicities
$o(n)$ that is properly edge-coloured by $n$ colours with at least $n + o(n)$ edges of each
colour contains a rainbow matching of size $n-C$, for some large absolute constant $C>0$. A similar result was also obtained independently by Gao, Ramadurai,
Wanless and Wormald~\cite{GRWW}. They showed that every properly edge-coloured multigraph with edge-multiplicity at most $O(\sqrt{n}/\log^2{n})$  such that each colour class has at least $2n + o(n)$ vertices has a full rainbow matching (note that here the colouring is proper while in our case it is not). Our Theorem~\ref{thm:main} is a generalization of  this result.

\subsection{The Proof overview}  We  further generalize the approach developed in~\cite{GRWW}.  If a multigraph $G$ has a certain structure  we construct a full rainbow matching via some randomized algorithm. This consists of finding an almost full rainbow matching by a sequence of random iterations and then completing it to a full rainbow matching by the greedy algorithm. To show that the last part is possible, the so-called differential equation method is used. Informally saying,  one analyzes the random method to show that the graph left at the very end behaves ``nicely" enough to contain such a matching.  To describe how degrees of vertices, edges and other variables are changing in the left-over graph after each random choice, differential equations are used. The following theorem gives the types of multigraph to which the algorithm can be applied. 

For a multigraph whose edges are coloured, let, for a colour $c$, $n_c$ denote the number of vertices in the colour class of $c$ and $e_c$ denote the number of edges. Let also, $d_v$ denote the number of edges incident to the vertex $v$.

\begin{theorem}\label{thm:auxthm}
For any $0 < \sigma_1 < \sigma_2$, there exists $n_0$ such that for all  $n\geq n_0$ the following is true. Suppose $G$ is a multigraph whose edges are coloured with $n$ colours, such that each colour class is a disjoint union of cliques of order at most three. If  the edge-multiplicity of $G$ is at most $ \frac{\sqrt{n}}{\log^2 n}$, and moreover,
\begin{itemize}
\item $n_c\leq 4n$ and $e_c \geq \sigma_2 n$, for every colour $c$,
    \item $d_v \leq \sigma_1 n$, for every vertex $v$,
\end{itemize}
then $G$ contains a full rainbow matching. 
\end{theorem}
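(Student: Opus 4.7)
The plan is to prove Theorem~\ref{thm:auxthm} by a semi-random iterative algorithm followed by a greedy completion, modelled on the nibble approach of~\cite{GRWW} for properly edge-coloured multigraphs but adapted to the fact that here each colour class is a union of triangles rather than a matching. The algorithm runs for $T = \Theta(\log n)$ rounds. In each round, every colour $c$ that is still unmatched independently proposes a uniformly random edge from among its currently \emph{available} edges, where an edge is available if both endpoints are unmatched. Any proposed edge that conflicts with another proposed edge, or with a previously matched edge, is discarded, and the survivors are appended to the growing rainbow matching. After $T$ rounds, with high probability only $o(n)$ colours remain unmatched, while the surviving structure is sparse enough that those colours can be completed greedily.

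To control the process I would track, for each unused colour $c$ and each unmatched vertex $v$, the random variables $e_c(t)$ = number of available edges of colour $c$ after round $t$, and $d_v(t)$ = number of available edges incident to $v$. Writing out the one-step conditional expectations, both $e_c(t)$ and $d_v(t)$ contract by essentially a common multiplicative factor $\alpha \in (0,1)$ determined by the global density, so the deterministic trajectories satisfy $e_c(t) \approx e_c(0)\alpha^t$ and $d_v(t) \approx d_v(0)\alpha^t$. Using Freedman's martingale inequality one would show that these variables stay within $o(1)$ relative error of the trajectories: the key point is that the per-round change in any tracked variable is dominated by the edge multiplicity, which is $O(\sqrt{n}/\log^2 n)$ by hypothesis, and this is exactly the Lipschitz slack needed. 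A union bound over the $n$ colours, $O(n)$ unmatched vertices, and $O(\log n)$ rounds preserves the trend hypothesis throughout the process with probability $1 - o(1)$.

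At time $T = \Theta(\log n)$ one can arrange $\alpha^T = n^{-\beta}$ for a small fixed $\beta > 0$, so that at most $n^{1-\beta}\log n = o(n)$ colours remain unmatched while each such colour still has $e_c(T) \geq n^{1-\beta}$ available edges. The final greedy phase processes the leftover colours one at a time, picking any available edge whose endpoints have not been used in this phase; since at every step fewer than $o(n)$ vertices are forbidden while $n^{1-\beta}$ edges are available, a legal edge always exists, producing the desired full rainbow matching.

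The main obstacle is establishing concentration of $e_c(t)$ when the colour class of $c$ contains triangles: deleting a matched vertex kills not just one edge but potentially two other edges of an incident triangle, creating intra-colour correlations absent in the proper-colouring setting of~\cite{GRWW}. The hypothesis restricting cliques to order at most three is what keeps these correlations local and of constant size, so that the quadratic variation in Freedman's inequality stays under control. The secondary technical point is that several quantities must concentrate simultaneously while the errors accumulate over $\Theta(\log n)$ rounds; this is where the $\log^2 n$ factor in the multiplicity bound is spent to absorb the union bound.
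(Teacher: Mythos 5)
Your proposal goes a genuinely different route from the paper, and there is a real gap in it. The paper does not re-sample all unmatched colours each round: it randomly orders the colours once, partitions them into $\tau \approx \log\log n$ chunks of size $\eps n$ with $\eps \asymp 1/\log\log n$, and passes through each chunk exactly once. Within each chunk it chooses an edge per colour, matches the non-colliding ones, greedily cleans up the (few) colliding colours inside that same chunk, and --- crucially --- introduces a \emph{zapping} step: each surviving vertex $v$ is deleted with an additional compensating probability $Q_i(v)$ chosen so that \emph{every} vertex is ``condemned'' with the same probability $p_i$. The trajectories $d(x)=1-\gamma x$, $e(x)=\sigma_2 d(x)^2$ are then derived from a pair of ODEs, and concentration is via McDiarmid/Azuma with increments kept at $O(\eps)$ per chunk.

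The missing idea in your write-up is exactly this uniformisation. You assert that ``both $e_c(t)$ and $d_v(t)$ contract by essentially a common multiplicative factor $\alpha$ determined by the global density.'' In the non-proper setting this is not justified: the per-round probability that a vertex $w$ is touched by a proposal is roughly $\sum_c d_w^c/e_c$, and because the colouring is not proper the colour-degrees $d_w^c$ can be wildly non-uniform across vertices (only the one-sided bounds $d_w \le \sigma_1 n$, $e_c \ge \sigma_2 n$ are given). Consequently the decay rate of $d_v(t)$ depends on the degree profile of $v$'s neighbourhood and can differ from vertex to vertex by a constant factor per round, which compounds over $\Theta(\log n)$ rounds to a polynomial discrepancy. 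Without the zapping trick (or some other device that equalises the per-vertex deletion probability), the claim that every $d_v(t)$ and every $e_c(t)$ track a single exponential $\alpha^t$ simply does not follow, and the subsequent Freedman argument has nothing to concentrate around. This is the structural obstacle the paper's $Q_i(v)$ probabilities were designed to remove.

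A secondary problem is the size of your bites. With every unmatched colour proposing each round, a vertex is marked with probability $\Theta(1)$, so collisions wipe out a constant fraction of proposals each round and the second-order (pair-collision) corrections are not small relative to the linearised drift. The paper deliberately works with bites of size $\eps n$, $\eps\to 0$, so that $P_i(v)=O(\eps)$, the collision set $\Phi_{i+1}$ has size $O(\eps^2 n)$, and the accumulated error over $\tau\approx 1/\eps$ chunks stays $o(1)$ relative to the trajectory. With macroscopic bites you would instead have to control $\Theta(1)$ multiplicative errors at every round, which the plain Azuma/Freedman bound (increment $O(\mu)$, $\Theta(n\log n)$ exposures) is too coarse to do near the end of the process: $\mu\sqrt{n\log n}\gg n^{1-\beta}$. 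Finally, as stated your final numbers do not close: you have $e_c(T)\ge n^{1-\beta}$ available edges but up to $n^{1-\beta}\log n$ remaining colours, and the greedy completion needs roughly $4\cdot(\text{number remaining})$ surviving vertices per leftover colour class, which your bound does not yet supply; this last point is tunable, but the first two are not.
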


In order to prove our main result, Theorem~\ref{thm:main}, we show that every multigraph we are considering contains a subgraph with the structure given in Theorem~\ref{thm:auxthm}. This  is done via  a careful random sampling.

\section{Some Probability Tools}
In this section we gather some classic probability results which we use throughout our proofs. 

\begin{prop} [\textbf{Chernoff bound}, \cite{DP}]
Let $X$ be a sum of $n$ independent $[0,1]$-valued random variables. Then, for all $t \geq 0$, 
$$\mathbb{P}[|X-\mathbb{E}[X]| > t] \leq 2e^{-\frac{2t^2}{n}}$$ 
\end{prop}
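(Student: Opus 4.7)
The plan is to prove this by the standard Cramér–Chernoff (exponential moment) method, deriving the bound as a special case of Hoeffding's inequality. Write $X = \sum_{i=1}^n X_i$ where each $X_i \in [0,1]$ is independent, and set $Y_i = X_i - \mathbb{E}[X_i]$, so $Y_i$ is mean zero and takes values in the interval $[-\mathbb{E}[X_i],\, 1-\mathbb{E}[X_i]]$ of length $1$. The goal is to bound $\mathbb{P}[\sum Y_i > t]$ and $\mathbb{P}[\sum Y_i < -t]$ separately and combine by a union bound, which is where the factor of $2$ in the statement comes from.

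For the upper tail, I would apply Markov's inequality to the exponentiated variable: for any $\lambda > 0$,
$$\mathbb{P}\Bigl[\sum_i Y_i > t\Bigr] \;\leq\; e^{-\lambda t}\,\mathbb{E}\Bigl[e^{\lambda \sum_i Y_i}\Bigr] \;=\; e^{-\lambda t}\prod_{i=1}^n \mathbb{E}\bigl[e^{\lambda Y_i}\bigr],$$
where the factorization uses independence of the $Y_i$'s. The crux of the argument is then Hoeffding's lemma, which asserts that if $Y$ is a mean-zero random variable taking values in $[a,b]$, then $\mathbb{E}[e^{\lambda Y}] \leq \exp\!\bigl(\lambda^2(b-a)^2/8\bigr)$. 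Applied to each $Y_i$ (for which $b-a = 1$) this gives $\mathbb{E}[e^{\lambda Y_i}] \leq e^{\lambda^2/8}$, so
$$\mathbb{P}\Bigl[\sum_i Y_i > t\Bigr] \;\leq\; \exp\!\Bigl(-\lambda t + \tfrac{n\lambda^2}{8}\Bigr),$$
and optimizing in $\lambda$ (the minimum is at $\lambda = 4t/n$) yields the desired bound $\exp(-2t^2/n)$.

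The only real content is Hoeffding's lemma itself, which is the main obstacle. I would prove it as follows. By convexity of $x \mapsto e^{\lambda x}$ on $[a,b]$, we have the chord bound $e^{\lambda x} \leq \frac{b-x}{b-a}e^{\lambda a} + \frac{x-a}{b-a}e^{\lambda b}$. Taking expectations and using $\mathbb{E}[Y] = 0$ collapses this to a function $\varphi(\lambda)$ of $\lambda$ alone; writing $p = -a/(b-a) \in [0,1]$ and $u = \lambda(b-a)$, one gets $\log \varphi(\lambda) = -pu + \log(1 - p + p e^u)$. A direct computation shows this function vanishes with vanishing derivative at $\lambda = 0$, and its second derivative equals $p(1-p)e^u/(1-p+pe^u)^2$, which is bounded above by $1/4$ for all $u \in \mathbb{R}$ (maximum of $t(1-t)$ over $t \in [0,1]$). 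Integrating twice gives $\log \varphi(\lambda) \leq \lambda^2(b-a)^2/8$, as claimed.

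Finally, applying the identical argument to $-Y_i$ (equivalently, to the $[0,1]$-valued variables $1 - X_i$) yields $\mathbb{P}[X - \mathbb{E}[X] < -t] \leq e^{-2t^2/n}$, and a union bound over the two tails produces the stated two-sided inequality with the constant $2$ in front.
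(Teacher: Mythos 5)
Your proof is correct and is the standard Cram\'er--Chernoff argument via Hoeffding's lemma; the paper itself gives no proof, citing this as a known result from the reference [DP] (Dubhashi and Panconesi). All the steps check out: the exponential Markov bound, the factorization by independence, the chord-bound proof of Hoeffding's lemma with the second-derivative estimate $p(1-p)e^u/(1-p+pe^u)^2 \le 1/4$, the optimization at $\lambda = 4t/n$, and the union bound giving the factor $2$.
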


\begin{prop} [\textbf{McDiarmid's Inequality}, \cite{McDiarmid}]
Let $k \in \mathbb{N}$ and $\Pi : [k] \rightarrow [k]$ be a permutation chosen uniformly at random. Let also $h$ be a non-negative real-valued function on the set of permuations of $\{1,\dots , k\}$ and define the random variable $Z := h(\Pi)$ and its median $M$. Suppose that there exist constants $c,r > 0$ such that the following two items occur for any (deterministic) permutation $\pi$:
\begin{itemize}
    \item Swapping two coordinates in the permutation $\pi$ changes $h(\pi)$ by at most $c$.
    \item If $h(\pi) = s$, then there is a set of at most $rs$ coordinates such that $h(\pi') \geq s$ for any other permutation $\pi'$ which agrees with $\pi$ on these coordinates.
\end{itemize}
Then, for all $t \geq 0$,

$$\mathbb{P}[|Z-M| \geq t] \leq 4 \exp \left(-\frac{t^2}{16rc^2(M+t)}\right)$$
\end{prop}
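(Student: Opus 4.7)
The plan is to prove the inequality via Talagrand's convex-distance method adapted from product measures to the uniform measure on the symmetric group $S_k$, with the two hypotheses playing complementary roles: the swap hypothesis bounds the effect of a single transposition on $h$, and the certificate hypothesis reduces the effective dimension from $k$ down to roughly $rZ$, which is what produces the factor $M + t$ in the denominator rather than $k$. I would handle the two tails separately and combine them.

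\textbf{Upper tail.} To bound $\mathbb{P}[Z \geq M + t]$, let $B = \{\pi : h(\pi) \leq M\}$, so that $\mathbb{P}(\Pi \in B) \geq 1/2$ by definition of the median. For any $\pi$ with $h(\pi) = s \geq M + t$, let $S(\pi) \subseteq [k]$ be the certificate from the second hypothesis, with $|S(\pi)| \leq rs \leq r(M+t)$. For every $\pi^* \in B$ one can transform $\pi^*$ into a permutation agreeing with $\pi$ on $S(\pi)$ using at most $N := |\{i \in S(\pi) : \pi^*(i) \neq \pi(i)\}|$ transpositions; each such swap changes $h$ by at most $c$ by the first hypothesis, and since the resulting permutation agrees with $\pi$ on its certificate, its $h$-value is at least $s$. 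This forces $h(\pi^*) \geq s - cN$, and together with $h(\pi^*) \leq M$ gives $N \geq t/c$. Taking the $\pi$-dependent weight vector $\alpha_i = \mathbf{1}[i \in S(\pi)]/\sqrt{|S(\pi)|}$ yields
$$\min_{\pi^* \in B} \sum_{i : \pi(i) \neq \pi^*(i)} \alpha_i \;\geq\; \frac{t/c}{\sqrt{r(M+t)}},$$
which is precisely a lower bound on Talagrand's convex distance $d_T(\pi, B)$. Invoking the convex-distance inequality $\mathbb{P}[d_T(\Pi, B) \geq u] \leq 2 e^{-u^2/16}$ at $u = (t/c)/\sqrt{r(M+t)}$ gives the upper tail of the claimed bound.

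\textbf{Lower tail.} The bound on $\mathbb{P}[Z \leq M - t]$ is obtained by a symmetric argument applied to $A := \{\pi : h(\pi) \geq M\}$, using the certificates of points $\pi^* \in A$ rather than of $\pi$. Because the natural weight vector now depends on the target $\pi^*$ rather than on the reference $\pi$, one first discretizes $A$ by level sets $\{h(\pi^*) \in [2^{j}M, 2^{j+1}M)\}$, applies the convex-distance argument within each, and takes a union bound; this produces a bound whose denominator contains $M$ rather than $M + t$. Since $M \leq M+t$, this is dominated by the upper-tail bound, and the two combine into the stated inequality with prefactor $4$ after adjusting constants.

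\textbf{Main obstacle.} The technical heart of the argument is Talagrand's convex-distance inequality for a \emph{uniformly random permutation}, as opposed to a product measure. Because the coordinates $\Pi(1), \dots, \Pi(k)$ are not independent, the standard inductive proof via the log-Sobolev inequality on product spaces does not transfer directly. McDiarmid's workaround is to couple $\Pi$ with the permutation obtained by a single uniformly random transposition, set up an induction on $k$ via this exchangeable pair, and absorb the loss from the dependence into a worse constant in the exponent --- this is the source of the factor $16$ in the statement rather than the $4$ that is standard in the product-measure case. All remaining steps (translating between transposition modifications and weighted Hamming distance, discretizing for the lower tail, and combining the two tails with the appropriate prefactor) are routine once the convex-distance inequality for permutations is in hand.
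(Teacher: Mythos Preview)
The paper does not prove this proposition: it is stated without proof in Section~2 (``Some Probability Tools'') as a quoted result from McDiarmid's paper, alongside the Chernoff and Azuma inequalities. There is therefore no proof in the paper to compare your proposal against.

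As an independent assessment: your outline via Talagrand's convex-distance inequality on $S_k$ is the standard route and is essentially how McDiarmid derives the result. One slip is worth noting. In the upper-tail paragraph you write $|S(\pi)| \leq rs \leq r(M+t)$ for $s \geq M+t$; the second inequality goes the wrong way. The correct argument is that $(s-M)/\sqrt{rs}$ is increasing in $s$ for $s \geq M$, so its minimum on $\{s \geq M+t\}$ is $t/\sqrt{r(M+t)}$, which still yields the claimed lower bound on $d_T(\pi,B)$. For the lower tail, the dyadic discretization is unnecessary: taking $A=\{h \leq M-t\}$ and using the certificate of a point $\pi$ with $h(\pi)\geq M$, the same monotonicity (now of $(s-(M-t))/\sqrt{rs}$ in $s \geq M$) gives $d_T(\pi,A) \geq t/(c\sqrt{rM})$ directly, and then Talagrand in the form $\mathbb{P}(A)\,\mathbb{P}(d_T(\Pi,A)\geq u)\leq e^{-u^2/16}$ together with $\mathbb{P}(h(\Pi)\geq M)\geq 1/2$ bounds $\mathbb{P}(A)$. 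Your identification of the main technical obstacle---proving the convex-distance inequality on the symmetric group rather than a product space---is accurate.
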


\begin{prop} [\textbf{Azuma's Inequality, \cite{DP}}]
Let $X_1,\dots ,X_n$ be independent random variables which take values in some set $S$ and let $f$ be real-valued on $S^n$ such that there exists a constant $c > 0$ such that for any $\mathbf{x} = (x_1, \dots , x_n) \in S^n$, changing a coordinate of $\mathbf{x}$ deviates $f(\mathbf{x})$ by at most $c$. Then, defining $Y = f(X_1,\dots,X_n)$, we have for every $t > 0$,
$$\mathbb{P}[|Y - \mathbb{E}[Y]| \geq t] \leq 2 \exp \left(-\frac{t^2}{2c^2 n}\right).$$
\end{prop}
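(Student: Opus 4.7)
The plan is to establish this standard bounded-differences inequality via the Doob martingale technique. First I would introduce the Doob martingale of $Y = f(X_1,\dots,X_n)$: set $Y_0 = \mathbb{E}[Y]$ and $Y_i = \mathbb{E}[Y \mid X_1,\dots,X_i]$ for $1 \leq i \leq n$, with increments $D_i = Y_i - Y_{i-1}$. Then $Y_n = Y$, the $(D_i)$ form a martingale difference sequence with respect to the filtration $\mathcal{F}_i = \sigma(X_1,\dots,X_i)$, and $Y - \mathbb{E}[Y] = \sum_{i=1}^n D_i$.

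The key structural step is to show $|D_i| \leq c$ almost surely. Using independence of the $X_j$, I can write $Y_i = g_i(X_1,\dots,X_i)$ for $g_i(x_1,\dots,x_i) := \mathbb{E}[f(x_1,\dots,x_i,X_{i+1},\dots,X_n)]$, and similarly $Y_{i-1} = \mathbb{E}_{X_i'}[g_i(X_1,\dots,X_{i-1},X_i')]$, where $X_i'$ is an independent copy of $X_i$. Hence $D_i = \mathbb{E}_{X_i'}\!\bigl[g_i(X_1,\dots,X_i) - g_i(X_1,\dots,X_{i-1},X_i')\bigr]$, and the bounded-differences hypothesis applied pointwise to $f$ (and preserved under taking expectations over $X_{i+1},\dots,X_n$) forces the integrand to have absolute value at most $c$.

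Next I would invoke Hoeffding's lemma conditionally: any mean-zero random variable supported in an interval of length $\ell$ has moment generating function bounded by $\exp(\lambda^2 \ell^2 / 8)$. Applied to $D_i$ given $\mathcal{F}_{i-1}$, which has conditional mean $0$ by the martingale property and is valued in an interval of length at most $2c$ by the previous step, this yields $\mathbb{E}[e^{\lambda D_i} \mid \mathcal{F}_{i-1}] \leq \exp(\lambda^2 c^2 / 2)$. Iterating this estimate via the tower property across $i = n, n-1, \dots, 1$ gives the unconditional bound $\mathbb{E}[e^{\lambda (Y - \mathbb{E}[Y])}] \leq \exp(n \lambda^2 c^2 / 2)$.

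Finally, I would combine with Markov's inequality, $\mathbb{P}[Y - \mathbb{E}[Y] \geq t] \leq e^{-\lambda t}\, \mathbb{E}[e^{\lambda(Y - \mathbb{E}[Y])}]$, and optimize over $\lambda > 0$. Choosing $\lambda = t/(nc^2)$ gives the upper tail bound $\exp(-t^2/(2nc^2))$; applying the same argument to $-Y$ controls the lower tail, and a union bound produces the factor of $2$ in the statement. The main point requiring care is the derivation of $|D_i| \leq c$, where independence of the $X_j$ is what promotes the deterministic coordinate-wise bound on $f$ into an almost-sure bound on the martingale differences; Hoeffding's lemma itself is a short convexity estimate on $e^{\lambda x}$ over an interval and can be cited or inlined.
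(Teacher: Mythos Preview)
Your proof is the standard and correct Doob-martingale/Hoeffding argument for the bounded-differences inequality. The paper, however, does not prove this proposition at all: it is stated with a citation to \cite{DP} and used as a black box, so there is no in-paper proof to compare your approach against.
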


\section{Proof of Theorem 1}

\begin{proof}
Note that we only need  to consider the case when the cliques mentioned in the statement are of order at most three. This is because any other case can be reduced to this one by deleting edges in such a way that all monochromatic $K_t$'s with $t \geq 4$ are transformed into a disjoint union of $K_3$'s and $K_2$'s without reducing the number of vertices in a colour class. We can also assume that for every colour $c$, $n_c \leq \lceil (2+\delta)n \rceil + 2$.  Finally, without loss of generality, we may assume $\delta < 2$. For any  colour $c$, let $a_c$, $b_c$ be the number of $K_3$, $K_2$ components, respectively, in the colour class of $c$. Note that $n_c = 3a_c + 2b_c$ and $e_c = 3a_c + b_c$.
\\ 

We construct a random subgraph $H \subseteq G$ in the following manner:
\begin{itemize}
    \item Independently, for each monochromatic $K_3$, either delete two of its edges (transforming it into $K_2$), each pair of edges having probability $\frac{1}{4}$ of being deleted or keep the $K_3$ with probability $\frac{1}{4}$.
\end{itemize}

Define for every vertex $v$, $d^{\textit{tr}}_v, d^{\textit{line}}_v$ to be respectively, the number of edges incident to $v$ which belong to a monochromatic $K_3$, $K_2$. Note that $d_v = d^{\textit{tr}}_v + d^{\textit{line}}_v$ and $Cd_v = \frac{d^{\textit{tr}}_v}{2} + d^{\textit{line}}_v$, where $Cd_v$ is colour degree of the vertex $v$. We will now show that with positive probability, $H$ will satisfy the conditions of Theorem~\ref{thm:auxthm}, thus $H$ will have having a full rainbow matching,  and so will $G$.

For every colour $c$, let $X_c$ denote the number of $K_3$ components in the colour class $c$ which remain unchanged, so that $n_c(H) = 2(a_c+b_c) + X_c$ and $e_c(H) = a_c+b_c+2X_c$. Note that $X_c \sim \text{Bin}(a_c,\frac{1}{4})$. Using the Chernoff bound, we  have that for  any $\eps \geq 0$, 
\begin{itemize}
    \item $\mathbb{P}(e_c(H) \leq \frac{n_c - \eps a_c}{2}) \leq 2e^{-\frac{\eps^2 a_c}{8}}$
\end{itemize}

Similarly, for every vertex $v$, let $Y_v$ denote the number of edges incident to $v$, belonging to a monochromatic $K_3$, which aren't deleted. We note that $d_v(H) = d_v^{\textit{line}} + Y_v$ and $Y_v \sim \text{Bin}(d_v^{\textit{tr}},\frac{1}{2})$. Again by the Chernoff bound, for any $ \eps \geq 0$ we have that
\begin{itemize}
    \item $\mathbb{P}(d_v(H) \geq Cd_v + \frac{\eps d_v^{\textit{tr}}}{2}) \leq 2e^{-\frac{\eps^2 d_v^{\textit{tr}}}{2}}$
\end{itemize}

These two facts allow us to get the following bounds on the structure of the subgraph $H$. Let $\eps_1 = \frac{3\delta}{2(2+\delta)}$ and $\eps_2 = \frac{\delta}{8}$. 

For colours $c$ such that $ a_c \leq \frac{ \eps_1 \lceil (2+\delta)n \rceil }{3}$, note that 
$$e_c(H) \geq a_c + b_c = \frac{n_c - a_c}{2} \geq \frac{\lceil (2+\delta)n \rceil - a_c}{2} \geq \frac{(3-\eps_1)\lceil (2+\delta)n \rceil}{6}$$
For the rest of the colours, since $n$ is large, $a_c \geq  2$ and so, 
$$\frac{(3-\eps_1)\lceil (2+\delta)n \rceil}{6} \leq \frac{3n_c-\eps_1 \lceil (2+\delta)n \rceil}{6} \leq \frac{n_c - \eps_1 \left(a_c - \frac{2}{3} \right)}{2} \leq \frac{n_c - \frac{\eps_1 a_c}{2}}{2},$$
where in the third inequality we used $3a_c \leq n_c \leq \lceil (2+\delta)n \rceil + 2$. Therefore,

$$\mathbb{P} \left(e_c(H) \leq \frac{(3-\eps_1)\lceil (2+\delta)n \rceil}{6}\right) \leq \mathbb{P}\left (e_c(H) \leq \frac{n_c - \frac{a_c \eps_1}{2}}{2}\right) = o\left(\frac{1}{n}\right).$$

Hence, with probability $1-o(1)$, we have that that $e_c(H) \geq \frac{(3-\eps_1)\lceil (2+\delta)n \rceil}{6}$ for every colour.

Similarly, for vertices $v$ with  $\eps_2 n < d_v^{\textit{tr}}$, since $Cd_v \leq n$ and $d_v^{\textit{tr}}\leq 2n$, we have
$$\mathbb{P} \left(d_v(H) \geq (1+\eps_2)n \right) \leq \mathbb{P} \left(d_v(H) \geq Cd_v + \eps_2 d_v^{\textit{tr}} \right) = o \left(\frac{1}{n^2} \right)$$
For the rest of the vertices, note that $d_v(H) \leq (1+\eps_2)n$. Hence, since $v(G) \leq (\lceil (2+\delta)n \rceil+2)n = O(n^2)$, we have that with probability $1-o(1)$, $d_v(H) \leq (1+\eps_2)n$ for every vertex.

To conclude, we have that with probability at least $1-o(1)$, for every colour $c$, by the choice of $\eps_1,\eps_2$,
\begin{itemize}
    \item $e_c(H) \geq \frac{(3-\eps_1)\lceil (2+\delta)n \rceil}{6} \geq (2+\delta)(\frac{3-\eps_1}{6})n = (1+\frac{\delta}{4})n$, for every colour $c$,
    \item $d_v(H) \leq (1+\eps_2)n = (1+\frac{\delta}{8})n$, for every vertex $v$,
\end{itemize}
Then, if $n$ is sufficiently large, with positive probability $H$ will satisfy the conditions of Theorem 2 and contain a full rainbow matching. 
\end{proof}

\section{Informal Treatment of Theorem 2}
The proof of Theorem 2 is technical, here we give a heuristic argument  why Theorem 2 is  true. We will formalize this in the next section.  Make a note of the following  greedy deterministic algorithm which we will use throughout the paper. Take a multigraph which is edge-coloured such that each colour class is a disjoint union of cliques and whose number of vertices is at least four times the number of colours. The greedy way of finding a full rainbow matching consists of picking an edge of each colour and deleting its vertices from the graph at each iteration. Indeed, we can check that when we do so, the number of vertices in each of the colour classes of those colours that aren't yet in the matching decreases by at most four. Given the initial assumption on the multigraph, this will then always produce a full rainbow matching. We now describe the randomized algorithm we use. Take a multigraph G which satisfies the conditions stated in Theorem 2.

Informally, the algorithm goes in the following manner:
we first randomly order the colours in order to put them into chunks $C_1,...,C_{\tau}$ of size $\eps n$ (except maybe the last one), so that $\tau = \lceil \frac{1}{\eps} \rceil$ - we will take $\eps = \eps(n)$ to be of a specific order so that the formalities work out; at each iteration $1 \leq i \leq \tau - 1$ we process the chunk $C_i$, that is, we construct a rainbow matching with the colours in $C_i$ and add it to the rainbow matching we have by the previous iterations in order to get a rainbow matching with the colours in $C_1 \cup \dots \cup C_i$; finally, iteration $\tau$ will consist of greedily finding a rainbow matching with the colours in $C_{\tau}$ and adding it to the previous rainbow matching  with the colours in $C_1 \cup \dots \cup C_{\tau - 1}$, thus constructing a full rainbow matching.
\\
At each iteration $1 \leq i \leq \tau - 1$, we will process chunk $C_i$ in such a way that after finishing iteration $i$, we expect to have:
\begin{itemize}
    \item [1)] For every vertex $v$ that survived and $j > i$, $d_v^{C_j}(i) \leq \eps d(i\eps) \sigma_1 n$, where $d_v^{C_j}(i)$ denotes the number of edges of colours in $C_j$ that are incident to $v$ after finishing iteration $i$.
    \item [2)] For every unprocessed  colour $c$, the number of edges in its colour class graph is $\geq e(i\eps) n$.
\end{itemize}
These functions $d,e$ will be defined in Section 5, but here, we will informally guess  what they should be. We first note that we should be able to have $d(0) = 1$ and $e(0) = \sigma_2$, given our assumptions on $G$ and since we randomly order the colours at the beginning of the algorithm.

Let's describe how we will process chunk $C_{i+1}$ in iteration $1 \leq i+1 \leq \tau - 1$ of the algorithm: 
\begin{itemize}
    \item [1)] First, we pick independently and u.a.r an edge of each colour in $C_{i+1}$. Let's say these are the edges that are chosen and that their vertices are marked. From these, we delete (as well as their vertices) from our graph the non-colliding ones and process them into our rainbow matching. Let's call these edges killed.
    \item [2)] We then zap (that is, delete from the graph) each vertex that survives the previous step, independently with some probability (specific to each vertex), so that overall, every vertex has the same probability $p_i$ (which will be defined later in Section 5) of being marked or zapped (let's call this condemned).
    \item [3)] Finally, we greedily process the rest of the colours that were involved in collisions in 1). In the end, we will have constructed a rainbow matching with the colours in $C_{i+1}$ and we add it to the previous rainbow matching with colours in $C_1 \cup \dots \cup C_i$ given by the previous iterations.
\end{itemize}
We will prove in the next section that the effect of the collisions occurring in 1) will be negligible and so, we will be able to perform 3). \ 

Because of this negligible effect of the collisions, for this informal analysis, we can think of a vertex being deleted as being equivalent to a vertex being condemned. 
\

Note that for a vertex v, we have
$$P(\text{v is chosen in step 1}) \leq \frac{d_v^{C_{i+1}}(i)}{\min_{c \in C_{i+1}} e^c(i)} \leq \eps \sigma_1 \frac{d(i\eps)}{e(i\eps)}$$
where $e^c(i)$ denotes the number of edges in the colour class graph of $c$ after iteration $i$.
Hence, we should be able to take $p_i := \eps \sigma_1 \frac{d(i\eps)}{e(i\eps)}$.

Let's now take a look at what we expect to happen to our parameters during iteration $i+1$. Let $a^c(i),b^c(i)$ denote, respectively, the number of $K_3$'s,$K_2$'s in the colour class graph of $c$ after iteration $i$. We ignore cases where 2 vertices in the same component are deleted, since again, we will show this to be negligible in the next section. We can then see that we should have:

\begin{itemize}
    \item [1)] For every unprocessed  colour $c$, $a^c(i+1) \approx a^c(i)(1-3p_i)$ and $b^c(i+1) \approx b^c(i) + p_i(3a^c(i)-2b^c(i))$
    \item [2)] For every vertex $v$ surviving the iteration and $j > i+1$, $d_v^{C_j}(i+1) \approx d_v^{C_j}(i)(1-p_i)$
\end{itemize}
\

Note that step 2) tells us that we should be able to have $d((i+1)\eps) = d(i\eps)(1-p_i)$. Further, note that for every unprocessed colour, the number of edges in its colour class after iteration $i+1$ is $e^c(i+1) = 3a^c(i+1) + b^c(i+1) \approx (3a^c(i) + b^c(i))(1-2p_i) = e^c(i)(1-2p_i)$. Hence, we should also be able to have $e((i+1)\eps) = e(i\eps)(1-2p_i)$.

Since we will choose $\eps = \eps(n)$ so that it tends to 0 as $n$ tends to $\infty$ we should be able to take the derivatives of $d,e$ at $x = i\eps$ where $0 \leq i \leq \tau - 1$. We then can get, given our choice of $p_i$, that:

\begin{itemize}
    \item $d' = -\sigma_1 \frac{d^2}{e}$
    \item  $e' = -2\sigma_1 d$
\end{itemize}
We can solve this to get $d(x) = 1 - (\frac{\sigma_1}{\sigma_2})x$ and $e(x) = \sigma_2 d(x)^2$. 

We are now in position to see if we expect to indeed be able to greedily process the last chunk $C_{\tau}$. For this, we need to check the values of these functions at $x = (\tau - 1)\eps \in [1-\eps,1]$. Note $d((\tau - 1)\eps) \geq 1 - \frac{\sigma_1}{\sigma_2} > 0$, by assumption. 
\

The number of vertices in the colour class of a colour $c \in C_{\tau}$ after iteration $\tau - 1$ is at least the number of edges, which is at least $e((\tau - 1)\eps)n \geq \sigma_2 (1 - \frac{\sigma_1}{\sigma_2})^2 n$. Since $\eps \rightarrow 0$, if $n$ is large then this will be larger than $4\eps n \geq 4|C_{\tau}|$ and therefore, by the discussion in the beginning of this section, we will be able to greedily find a rainbow matching with the colours in $C_{\tau}$. Concluding, we get a full rainbow matching in $G$.

\section{Proof of Theorem 2}
\begin{proof}
We first give some notation and define in detail the algorithm. Our setup is a graph G with the conditions stated. We will also take $\eps = \eps(n) \in [\frac{1}{2\log \log n}, \frac{1}{\log \log n}]$ and moreover, such that $\eps n \in \mathbb{N}$. Note this exists provided that $n$ is large enough.
\\
Initially, take a random permutation of the colours,  which results in a partition of them into sets (which we shall call "chunks") $C_1,...,C_{\tau}$, where $\tau = \lceil \frac{1}{\eps} \rceil$ and every chunk has size $\eps m$ except for maybe the last chunk (which has size at most $\eps m$). We then start with our iterations. 
\\
Let at the start $M = \emptyset$, $G_0 = G$ and denote the random permutation we performed above as iteration 0. At iteration $1 \leq i \leq \tau - 1$, we ''process'' the chunk $C_i$ in the following way. We look at the graph $G_{i-1}$ that we have, that is, the one that we are left with after iteration $i-1$; For each vertex $v$ in this graph, there is a probability $P_{i-1}(v) \in [0,1]$ such that if we pick independently and u.a.r an edge of each colour in $C_i$, the probability that $v$ is incident to one of these edges is $P_{i-1}(v)$; Given this, define for every vertex $v$ the probability $Q_{i-1} (v)$ to be such that $P_{i-1}(v) + Q_{i-1}(v) (1-P_{i-1}(v)) = p_{i-1}$, which we will later define; If there exists such a $Q_{i-1}(v) \in [0,1]$ for every vertex $v$, we continue - if not, the algorithm breaks; We now randomly assign to each vertex a bit $Z_v \in \{0,1\}$; We do this independently across all vertices and such that $Z_v \sim \text{Ber}(Q_{i-1}(v))$; We then proceed with the following steps: 
\begin{itemize}
    \item [\textit{Step 1}] First, pick independently and u.a.r an edge of each colour in $C_i$. Denote these edges by the \textbf{chosen} ones and say that their vertices are the \textbf{marked} vertices. Note by before that $P_{i-1}(v) = \mathbb{P}(v \text{ is marked})$ for every vertex $v$.
    \\
    
    \item [\textit{Step 2}] From the chosen edges, say that two of them \textbf{collide} if they share a vertex. Delete from $G_{i-1}$, along with their vertices, those chosen edges which don't collide with any other. Say that these are the \textbf{killed} edges and their vertices are also \textbf{killed}. Add the killed edges to $M$ and say that the colours of these edges were processed into $M$.
    \\
    
    \item [\textit{Step 3}] For each vertex $v \in V(G_{i-1})$ that survived Step 2 (i.e, wasn't killed), we look at the value of $Z_v$. If this is $1$, then we \textbf{zap} (i.e, delete from $G_{i-1}$) the vertex $v$. Otherwise, we do nothing.
    \\
    
    \item [\textit{Step 4}] Let $\Phi_i \subseteq C_i$ denote the set of colours which haven't yet been processed into $M$ because their chosen edges collide with others. Greedily find a rainbow matching of these colours and delete it from the graph along with its vertices. Add the rainbow matching to $M$, thus processing the colours in $\Phi_i$. Denote the graph resulting from these 4 steps by $G_i$.
    \\
\end{itemize}

Say that a vertex $v$ was \textbf{condemned} if it was \textbf{zapped} or \textbf{marked}. Note then that $\mathbb{P}(v \text{ condemned}) = \mathbb{P}(v \text{ marked}) + \mathbb{P}(v \text{ zapped} | \text{$v$ not marked})$ $\mathbb{P}(v \text{ not marked})= P_{i-1}(v) + \mathbb{P}(Z_v = 1) (1-P_{i-1}(v)) = P_{i-1}(v) + Q_{i-1} (v) (1-P_{i-1}(v)) = p_{i-1}$. 
\

To finish the algorithm, iteration $\tau$ consists of greedily finding a rainbow matching in $G_{\tau - 1}$ of the colours in $C_{\tau}$ and adding it to $M$, thus processing the colours in $C_{\tau}$. 
\\

Note that if the algorithm is successful, $M$ will be a full rainbow matching. However, there are many things that can break the algorithm. Specifically, each of the iterations $1 \leq i \leq \tau - 1$ doesn't break if and only if we can find the probabilities $Q_{i-1}(v)$ and perform Step 4. Moreover, for the algorithm not to break, we also need to be able to perform iteration $\tau$.
\\

Let's now give some notation which we will need in the analysis of the algorithm: Define $\gamma = \frac{\sigma_1}{\sigma_2} < 1$ and $d(x) = 1-\gamma x \in [1-\gamma,1]$ for $0 \leq x \leq 1$. For each colour $c$, let $e_c(x) = e_c(0)d(x)^2$ where $e_c(0) := \frac{e_c}{n}$ (note that $4 \geq e_c(0) \geq \sigma_2$). Let $e^c(i)$ denote the number of edges of colour $c$ in the graph $G_i$, that is in the graph we have after finishing iteration $i$. Define also $e_i := \max_{c \in C_j, j > i} |e_c(i\eps)n - e^c(i)|$. For a vertex $v \in V(G_i)$, that is, one which survived iterations 1 through $i$, let $d_v^{C_j} (i)$ denote the degree of $v$ in $G_i$ with respect to colours in $C_j$. Let then $d_i := \max_{j > i,v \in G_i} (d_v^{C_j} (i) - \eps \sigma_1 d(i\eps) n)^+$. Note that $2 \eps n \geq d_v$ and so, $d_i \leq (2 + 4 \sigma_1)\eps n$. Take for $0 \leq j \leq \tau - 2$, $p_j := \frac{\eps \gamma}{d(j\eps)} + c_j$, where $c_j = \frac{2d_j d(j\eps) + 2\eps \gamma e_j}{\sigma_2 d(j\eps)^3 n} \geq 0$. Let also $\mu := \frac{\sqrt{n}}{\log ^2 n }$, which by assumption is an upper bound on the edge multiplicity between each pair of vertices.
\\

We now begin with the analysis of the algorithm.
\begin{lemma}

There exist constants $t_j = t_j(\sigma_1, \sigma_2) > 0$, ($2 \leq j \leq 5)$ and some $n(\sigma_1, \sigma_2) \in \mathbb{N}$, such that if $n \geq n(\sigma_1, \sigma_2)$ then for every $0 \leq i \leq \tau - 2$, if we have just finished iteration $i$ with $e_i \leq t_2 n$, then with positive probability we can perform iteration $i+1$ (that is we can successfully process the chunk $C_{i+1}$) and get, at the end, 
\begin{itemize}
    \item $d_{i+1} \leq d_i + t_3 \eps^3 n$
    \item $e_{i+1} \leq e_i(1+\frac{t_4 \eps}{d(i\eps)}) + t_5 \eps^2 n + \frac{16d_i}{\sigma_2}$
\end{itemize}
\end{lemma}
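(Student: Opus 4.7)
The plan is to verify three things about iteration $i+1$: first, the marking probability $P_i(v)$ is at most $p_i$ for every $v$ so that the bits $Z_v$ can be defined; second, the greedy Step~4 succeeds; and third, the new deviation parameters $d_{i+1}$, $e_{i+1}$ satisfy the claimed bounds with positive probability. I would begin with feasibility. By the union bound over the independent chosen edges in $C_{i+1}$,
\[
P_i(v) \leq \sum_{c \in C_{i+1}} \frac{d_v^c(i)}{e^c(i)} \leq \frac{d_v^{C_{i+1}}(i)}{\min_{c \in C_{i+1}} e^c(i)} \leq \frac{\eps\sigma_1 d(i\eps) n + d_i}{\sigma_2 d(i\eps)^2 n - e_i},
\]
using the inductive bounds on $d_v^{C_{i+1}}(i)$ and $e^c(i)$. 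Choosing $t_2$ small enough (say $t_2 \leq \sigma_2(1-\gamma)^2/2$) makes the denominator at least $\sigma_2 d(i\eps)^2 n/2$. Expanding $(1 - e_i/C)^{-1}$ and comparing term by term with the definition $p_i = \eps\gamma/d(i\eps) + c_i$ shows that the two summands of $c_i$ were exactly tailored to absorb the $d_i$ and $e_i$ contributions of the ratio, giving $P_i(v) \leq p_i$ with slack of order $d_i/n$.

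Next I would compute one-step expectations. Since every surviving vertex is condemned with marginal probability exactly $p_i$ by the definition of $Q_i$, for any $c \in C_j$ with $j > i+1$ and $v \in V(G_i)$ we have
\[
\mathbb{E}\bigl[d_v^{C_j}(i+1) \mid v \text{ survives}\bigr] \approx d_v^{C_j}(i)(1-p_i), \qquad \mathbb{E}[e^c(i+1)] \approx e^c(i)(1-p_i)^2,
\]
up to correlation terms coming from endpoints that share an edge of $C_{i+1}$ (size $O(p_i^2)$ per such edge) and the effect of Step~4 (bounded below). Matching against the targets $\eps\sigma_1 d((i+1)\eps) n$ and $e_c(i\eps)n(1 - \eps\gamma/d(i\eps))^2$, and substituting $p_i = \eps\gamma/d(i\eps) + c_i$, the inductive errors propagate as follows. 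For $d_v^{C_j}$ the error is multiplied by $(1-p_i)\leq 1$ and acquires an additive contribution of order $\eps^2 p_i n + \mu = O(\eps^3 n)$, yielding the $t_3\eps^3 n$ increment. For $e^c$ the error is multiplied by $(1-p_i)^2 \leq 1 + O(\eps/d(i\eps))$ (producing the $t_4\eps/d(i\eps)$ factor), acquires a second-order term $e^c(i)p_i^2 = O(\eps^2 n)$ (the $t_5\eps^2 n$ term), and picks up a further additive $2c_i e^c(i) \leq 16 d_i/\sigma_2$ coming from the over-correction in $c_i$ designed to absorb the $d_i$ slack from the feasibility calculation.

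For concentration the independent randomness driving iteration $i+1$ is the family $\{E_c : c \in C_{i+1}\} \cup \{Z_u : u \in V(G_i)\}$. Changing any single coordinate shifts $d_v^{C_j}(i+1)$ or $e^c(i+1)$ by at most $O(\mu)$: for $Z_u$ because at most $\mu$ parallel edges join $u$ to any other vertex, and for $E_c$ because the chosen edge's endpoints change by at most four, each triggering $\leq \mu$ further edge deletions. Only $O(\eps n + n_c) = O(n)$ coordinates are relevant to any single target, so Azuma's inequality gives a deviation of size $\mu\sqrt{n\log n} = o(\eps^3 n)$ with probability $1 - n^{-\omega(1)}$ (using $\eps \geq 1/(2\log\log n)$ and $\mu = \sqrt{n}/\log^2 n$). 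A union bound over the $O(n^2)$ vertex-colour pairs leaves a positive-probability event on which every variable is concentrated. The same inequality applied to $|\Phi_{i+1}|$, whose expectation is at most $\sum_v \binom{\text{marks at }v}{2} = O(\eps^2 n)$, controls the colliding colours; since each surviving colour class has $\Omega(n)$ vertices and $4|\Phi_{i+1}| \ll n$, the deterministic greedy algorithm from Section~4 handles $\Phi_{i+1}$.

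The main obstacle is the bookkeeping in the expectation step: one must carefully track how the inductive error $d_i$ enters the choice of $p_i$ through $c_i$, produces an over-correction in $\mathbb{E}[e^c(i+1)]$ relative to its target, and reappears as the additive $16 d_i/\sigma_2$ term in the $e_{i+1}$ bound. All other contributions are either multiplicative of size $1 + O(\eps/d(i\eps))$ (absorbed into $t_4$) or second-order in $\eps$ and $p_i$ (absorbed into $t_5\eps^2 n$), while the concentration itself is comfortable once one checks that only $O(n)$ coordinates affect each individual target.
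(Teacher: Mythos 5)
Your proposal follows the same overall architecture as the paper's proof: establish feasibility of the probabilities $Q_i(v)$ via the bound $P_i(v)\leq p_i$ (the paper's Lemma 4), control $|\Phi_{i+1}|$ so that Step 4 can be performed greedily (Lemma 5), compute one-step expectations, obtain concentration via Azuma with bounded differences of order $\mu$, and then do the bookkeeping with $c_i$ to recover the stated recursions for $d_{i+1}$ and $e_{i+1}$. The main structural deviation is that you propose a single Azuma application over the joint family $\{E_c\}\cup\{Z_u\}$, whereas the paper decomposes each target (e.g.\ $L_c=L'_c+L''_c-L'''_c$) and uses a Chernoff bound for the $Z$-measurable piece, Azuma on the chosen edges for the edge-measurable piece, and a conditional Azuma (condition on $Z=S$, then vary the chosen edges) for the cross term. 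Your unified version does work once one checks that only $O(\eps n)$ coordinates are relevant per target, as you note, and it is a perfectly reasonable streamlining.

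One piece of the bookkeeping in your expectation step is misattributed and, as written, loses a term. You say the error in $e^c$ ``is multiplied by $(1-p_i)^2\leq 1+O(\eps/d(i\eps))$, producing the $t_4\eps/d(i\eps)$ factor,'' and then separately claim $2c_i e^c(i)\leq 16 d_i/\sigma_2$. But $(1-p_i)^2<1$, so it cannot be the source of a multiplicative factor $>1$; that multiplier $1+t_4\eps/d(i\eps)$ in fact comes from the $e_i$-dependent half of $c_i$. Writing $c_i=\frac{2d_i d(i\eps)+2\eps\gamma e_i}{\sigma_2 d(i\eps)^3 n}$ and using $e_c(0)\leq 4$, one gets $2c_i e_c(i\eps)n\leq \frac{16 d_i}{\sigma_2}+\frac{16\eps\gamma e_i}{\sigma_2 d(i\eps)}$; your inequality $2c_i e^c(i)\leq 16d_i/\sigma_2$ silently drops the second summand, which is exactly the one that, combined with $1-2p_i\leq 1-\frac{2\eps\gamma}{d(i\eps)}$, produces $1+(\frac{16}{\sigma_2}-2)\frac{\eps\gamma}{d(i\eps)}$, i.e.\ the $t_4$ term. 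The additive $\frac{16 d_i}{\sigma_2}$ in the Lemma comes from the other half of $c_i$. This is a bookkeeping slip rather than a wrong idea, and fixing it brings your derivation into line with the paper's.
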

\begin{proof}
See Section 6.
\end{proof}

\begin{lemma}
There exists some $n'(\sigma_1, \sigma_2) \in \mathbb{N}$ such that if $n \geq n'(\sigma_1, \sigma_2)$, with positive probability we have $d_0 \leq \eps^2 n$
\end{lemma}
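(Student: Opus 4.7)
Recall that iteration $0$ of the algorithm is just the random ordering of colours, so $G_0 = G$ and $d(0) = 1$; the quantity to control is therefore
\[
d_0 \;=\; \max_{j \geq 1,\ v \in V(G)} \bigl(d_v^{C_j}(0) - \eps \sigma_1 n\bigr)^{+}.
\]
Fix a vertex $v$ and an index $j \geq 1$, and let $X_c$ denote the number of edges of colour $c$ incident to $v$ in $G$. Since each colour class is a disjoint union of cliques of order at most three, $v$ lies in at most one such clique per colour, so $X_c \leq 2\mu$ where $\mu = \sqrt{n}/\log^2 n$; also $\sum_{c} X_c = d_v \leq \sigma_1 n$. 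Under the uniform random permutation of colours, $C_j$ is a uniformly random subset of at most $\eps n$ of the $n$ colours, and
\[
d_v^{C_j}(0) \;=\; \sum_{c \in C_j} X_c, \qquad \mathbb{E}\bigl[d_v^{C_j}(0)\bigr] \;\leq\; \eps \sigma_1 n.
\]

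The plan is to concentrate this hypergeometric-type sum around its mean. Applying Hoeffding's inequality for sampling without replacement with sample size $k = \eps n$ and range $B = 2\mu$ (or, equivalently, a bounded-differences inequality applied to the permutation, since swapping two coordinates changes $d_v^{C_j}(0)$ by at most $2\mu$) gives
\[
\mathbb{P}\bigl(d_v^{C_j}(0) - \mathbb{E}[d_v^{C_j}(0)] \geq \eps^{2} n\bigr) \;\leq\; \exp\!\left(-\frac{2(\eps^{2}n)^{2}}{\eps n \cdot (2\mu)^{2}}\right) \;=\; \exp\!\left(-\frac{\eps^{3}\log^{4} n}{2}\right).
\]
Because $\eps \geq 1/(2\log\log n)$, this bound is $\exp\bigl(-\Omega(\log^{4} n / (\log\log n)^{3})\bigr)$, which decays faster than any polynomial in $n$. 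Taking a union bound over the $|V(G)| \leq \sum_{c} n_{c} \leq 4n^{2}$ choices of $v$ and the $\tau = O(\log\log n)$ choices of $j$ (together $O(n^{2}\log\log n)$ pairs) shows that with probability $1-o(1)$ every $d_v^{C_j}(0)$ is at most $\eps\sigma_1 n + \eps^{2} n$, so $d_0 \leq \eps^{2}n$. Since $1-o(1) > 0$ for all sufficiently large $n$, this yields the required positive probability bound.

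\textbf{Main obstacle.} The heart of the estimate is the tight numerology between $\mu$, $\eps$, and the target deviation. The multiplicity bound $\mu \leq \sqrt{n}/\log^{2} n$ is precisely what keeps each $X_c$ small enough that the concentration exponent is $\eps^{3}n/\mu^{2} = \eps^{3}\log^{4} n$; combined with $\eps \geq 1/(2\log\log n)$, this is comfortably larger than the $\log n$ needed to absorb the polynomial union bound. Any weakening of either hypothesis would derail the concentration step, but once these quantitative relationships are verified, the rest is routine.
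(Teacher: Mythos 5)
Your argument is correct and reaches the same conclusion, but it takes a genuinely different route from the paper's. The paper invokes McDiarmid's permutation inequality (stated in Section 2), which concentrates $d_v^{C_j}(0)$ around its median $M$, and then must do a little work (integrating the tail, relating median to mean via $(\sqrt{M}-\sqrt{512\pi})^2 - 1024\pi \leq \mathbb{E}[d_v^{C_j}(0)]$) to convert this into a bound around $\eps\sigma_1 n$. You instead observe that, for each fixed $j$, the chunk $C_j$ is marginally a uniform $\eps n$-subset of the colours, so $d_v^{C_j}(0) = \sum_{c\in C_j} X_c$ is a sum sampled without replacement, and Hoeffding's inequality for sampling without replacement concentrates it directly around the mean, avoiding the median detour entirely. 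This is cleaner and more elementary, and the exponent you obtain is comfortably super-polynomial, so the union bound over $O(n^2\log\log n)$ pairs closes. Both approaches prove the lemma.

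One small correction worth making: you bound the per-colour degree by $X_c \leq 2\mu$, reasoning that the clique containing $v$ can have edges of multiplicity up to $\mu$. But in this paper each colour class is a \emph{simple} disjoint union of cliques (this is built into the identities $n_c = 3a_c+2b_c$, $e_c = 3a_c + b_c$); the multiplicity bound $\mu$ is across all colours, not within one colour. So the correct per-colour bound is $X_c \leq 2$, and the paper's Lipschitz constant for a transposition is indeed $2$, not $2\mu$. Using $X_c \leq 2$ would give you the much stronger exponent $\eps^3 n/2$ and would show, as the paper's computation does, that $d_0 = O(\sqrt{\eps n}\log n) \ll \eps^2 n$, so in fact the edge-multiplicity hypothesis plays no role in this lemma at all. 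Your version happens to be a valid over-estimate and still yields the needed tail bound, so there is no gap, but the final remark that ``any weakening of either hypothesis would derail the concentration step'' is not correct for this lemma: the $\mu \leq \sqrt{n}/\log^2 n$ bound is essential elsewhere (e.g.\ in Lemmas~8, 12, 13 of the appendix), but not here. Also, the parenthetical claim that Hoeffding for sampling without replacement is ``equivalently'' the bounded-differences inequality for permutations is not literally correct: the latter has an extra factor of $n$ (rather than $k=\eps n$) in the denominator of the exponent, giving $\eps^4$ where you wrote $\eps^3$. Both suffice, but they are not the same bound.
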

\begin{proof}
Note that for a vertex $v$ and $1 \leq j \leq \tau$, $d_v^{C_j} (0)$ is determined by the random permutation of the colours performed in the beginning. Note by swapping two colours in that permutation we change $d_v^{C_j} (0)$ by at most $2$. Further, if $d_v^{C_j} (0) = s$, there is a set of at most $s$ colours, which for any other permutation in which these are left unchanged, guarantee that $d_v^{C_j} (0) \geq s$. Then, by McDiarmid's inequality, we have that if $M$ is the median of $d_v^{C_j} (0)$, then for $t \geq 0$,
\begin{equation} \label{ineq:1}
\mathbb{P}(|d_v^{C_j} (0) - M| \geq t) \leq 4e^{-\frac{t^2}{64(M+t)}}
\end{equation}
Note then, by integrating, we have that $\mathbb{E}[|d_v^{C_j} (0) - M|] \leq 4 \int_0^{\infty} \mathbb{P}(|d_v^{C_j} (0) - M| \geq t) dt \leq 4 \int_0^{\infty} e^{-\frac{t^2}{64(M+t)}} dt \leq 4\int_0^M e^{-\frac{t^2}{128M}} dt + 4\int_M^{\infty} e^{-\frac{t}{128}} dt \leq 32 \sqrt{2 \pi M} + 512e^{-\frac{M}{128}} \leq 2 \sqrt{512 \pi M} + 512 \pi$.
Now, note this in particular, gives us that $M - \mathbb{E}[d_v^{C_j} (0)] \leq 2 \sqrt{512 \pi M} + 512 \pi$ and thus, $$(\sqrt{M} - \sqrt{512 \pi})^2 - 1024\pi \leq \mathbb{E}[d_v^{C_j} (0)] $$
Since $\mathbb{E}[d_v^{C_j} (0)] \leq \eps d_v \leq \eps \sigma_1 n$, by assumption, we can see then that if $n$ is large enough, $M \leq \eps \sigma_1 n + O(\sqrt{\eps \sigma_1 n})$. Hence, by (\ref{ineq:1}) with $t = \sqrt{\eps n} \log n$, we get 
$$\mathbb{P}(d_v^{C_j} (0) \geq \sqrt{\eps n} \log n + \eps \sigma_1 n + O(\sqrt{\eps \sigma_1 n})) \leq 4e^{-\frac{\eps n \log^2 n}{O(\eps n)}} = o(n^{-4})$$

Concluding, we get with probability $1-o(m^{-4})$,
$$d_v^{C_j} (0) \leq \eps \sigma_1 n + O(\sqrt{\eps n} \log n)$$
Since we have $O(n^3)$ pairs $v,j$, then, with positive probability, we have $d_0 = O(\sqrt{\eps n} \log n) \leq \eps^2 n$ if $n$ is large enough.
\end{proof}

\begin{lemma}
Take the positive constants $t = t_3$, $s = t_4$ and $r = t_5 + \frac{16(t_3 + 1)}{\sigma_2}$. There exists some $n''(\sigma_1, \sigma_2) \in \mathbb{N}$ such that if $n \geq n''(\sigma_1, \sigma_2)$, then with positive probability, we can perform the algorithm up to iteration $\tau - 1$ (that is, we can successfully process all but the last chunk) and get, $\forall \text{ } 0 \leq i \leq \tau - 2$, 
\begin{itemize}
    \item $d_0 \leq \eps^2 n$
    \item $d_{i+1} \leq d_i + t \eps^3 n$
    \item $e_{i+1} \leq e_i(1+\frac{s \eps}{d(i\eps)}) + r \eps^2 n$
\end{itemize}
\end{lemma}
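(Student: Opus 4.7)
My plan is to obtain Lemma 3 by a finite induction on $i$, using Lemma 2 as the base case and Lemma 1 as the inductive step. The non-trivial bookkeeping is (i) to absorb the $16 d_i / \sigma_2$ summand appearing in Lemma 1's bound on $e_{i+1}$ into a clean $r\eps^2 n$ error, and (ii) to verify that the hypothesis $e_i \leq t_2 n$ required by Lemma 1 propagates across all $\tau - 1$ iterations.

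For the base case, Lemma 2 gives $d_0 \leq \eps^2 n$ with positive probability, and $e_0 = 0$ holds trivially since, before any iteration is run, the definition $e_c(0) = e_c/n$ yields $e^c(0) = e_c = e_c(0)n$. For the inductive step, I would assume iteration $i$ has just finished with the claimed bounds, so in particular $e_i \leq t_2 n$. Lemma 1 then says iteration $i+1$ succeeds with positive probability and produces $d_{i+1} \leq d_i + t_3 \eps^3 n$ and $e_{i+1} \leq e_i \bigl(1 + t_4 \eps / d(i\eps)\bigr) + t_5 \eps^2 n + 16 d_i / \sigma_2$. Iterating the degree bound and using $\tau \leq 1/\eps + 1$ gives
$$d_i \leq d_0 + i \cdot t_3 \eps^3 n \leq \eps^2 n + t_3 \eps^2 n = (1 + t_3)\eps^2 n,$$
so $16 d_i / \sigma_2 \leq (16(1+t_3)/\sigma_2)\eps^2 n$. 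With $s = t_4$ and $r = t_5 + 16(t_3+1)/\sigma_2$ this converts Lemma 1's recursion into exactly the clean one claimed in the statement.

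It remains to check that $e_i \leq t_2 n$ persists for every $0 \leq i \leq \tau - 2$, and to compose the positive-probability statements. Using $d(x) \geq 1 - \gamma > 0$ on $[0,1]$ and iterating the recursion from $e_0 = 0$,
$$e_i \leq r \eps^2 n \sum_{j=0}^{i-1} \prod_{k=j+1}^{i-1}\left(1 + \frac{s\eps}{d(k\eps)}\right) \leq r \eps^2 n \cdot \tau \cdot \exp\!\left(\frac{s}{1-\gamma}\right) = O(\eps n),$$
which is well below $t_2 n$ for large $n$ since $\eps = o(1)$. The probabilities then compose via the tower rule: Lemma 2 supplies a positive initial probability, and conditional on the state after iteration $i$ (which by the inductive hypothesis satisfies $e_i \leq t_2 n$) Lemma 1 supplies a positive conditional probability at step $i+1$. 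The main obstacle to watch for is that we need only strict positivity rather than any quantitative lower bound on these conditional probabilities: since $\tau$ is a fixed finite integer for each $n$, a finite product of strictly positive numbers is strictly positive, which is exactly what the lemma asserts.
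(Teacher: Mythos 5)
Your proposal is correct and follows essentially the same approach as the paper: induction with Lemma 2 as the base case (noting $e_0 = 0$, which the paper uses implicitly), propagating $e_i \leq t_2 n$ by iterating the recursion and observing the resulting bound is $O(\eps n) = o(n)$, absorbing $16 d_i/\sigma_2$ into $r\eps^2 n$ via the iterated degree bound $d_i \leq (1+t_3)\eps^2 n$, and composing the positive conditional probabilities from Lemma 1. The only cosmetic difference is your slightly cruder constant $\exp(s/(1-\gamma))$ where the paper's integral comparison gives $(1-\gamma)^{-s/\gamma}$; both are absolute constants and suffice.
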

\begin{proof}
Take $n''(\sigma_1, \sigma_2) \geq n'(\sigma_1, \sigma_2),n(\sigma_1, \sigma_2)$ and moreover, such that we have $\frac{1}{\log \log n''} \leq \frac{t_2(1-\gamma)^{\frac{s}{\gamma}}}{r}$. We prove the lemma inductively on $k \geq -1$. The statement of the lemma is the case $k = \tau - 2$. The base case $k = -1$ follows by Lemma 2. Suppose then that it is true for some $\tau - 2 > k \geq -1$. That is, if $n \geq n''(\sigma_1, \sigma_2)$ then with positive probability, we can perform the algorithm up to iteration $k+1$ (that is, we can successfully process all chunks $C_j$ with $j \leq k+1$) and get, $\forall \text{ } 0 \leq i \leq k$, 
\begin{itemize}
    \item $d_0 \leq \eps^2 n$
    \item $d_{i+1} \leq d_i + t \eps^3 n$
    \item $e_{i+1} \leq e_i(1+\frac{s \eps}{d(i\eps)}) + r \eps^2 n$
\end{itemize}

Note that by iterating the above bounds we get that $e_{k+1} \leq (k+1) r \eps^2 n$ $\prod_{j = 1}^{k} (1+\frac{s\eps}{d(j\eps)}) \leq (k+1) r \eps^2 n \text{ exp } (s\eps \sum_{j=1}^{k} \frac{1}{d(j\eps)})$. By comparing the sum to the integral, this is at most $ (k+1) r\eps^2 n \text{ exp } (s \int_{\eps}^{(k+1)\eps} \frac{dx}{1-\gamma x}) $ which since $ \eps , (k+1)\eps \in (0,1)$, is at most $r\eps n \text{ exp } (\frac{s}{\gamma} \log (\frac{1}{1-\gamma})) = \frac{r \eps n}{(1-\gamma)^{\frac{s}{\gamma}}} \leq t_2 n$ as $\eps \leq \frac{t_2(1-\gamma)^{\frac{s}{\gamma}}}{r}$. Also by iterating, we get $d_{k+1} \leq (t+1) \eps^2 n$.

Then, by Lemma 1, with positive probability (conditional on the previous iterations), we can successfully perform iteration $k+2$ and get
\begin{itemize}
    \item $d_{k+2} \leq d_{k+1} + t \eps^3 n$
    \item $e_{k+2} \leq e_{k+1}(1+\frac{s \eps}{d((k+1)\eps)}) + t_5 \eps^2 n + \frac{16d_{k+1}}{\sigma_2} \leq e_{k+1}(1+\frac{s \eps}{d((k+1)\eps)}) + r \eps^2 n$
\end{itemize}
Thus, case $k+1$ is also true and therefore, the Lemma follows by induction.
\end{proof}

Note that Theorem 2 will follow immediately from this last Lemma 3. Indeed, we get that if $n$ is large enough, with positive probability, the algorithm is successful in processing all chunks $C_j$ with $j \leq \tau - 1$. Further, by the same argument as in the proof of Lemma 3, we can also have that $e_{\tau - 1} \leq \frac{r \eps n}{(1-\gamma)^{\frac{s}{\gamma}}}$. That is, for every colour $c \in C_{\tau}$  
$$e^c(\tau - 1) \geq e_c((\tau - 1)\eps)n - \frac{r\eps n}{(1-\gamma)^{\frac{s}{\gamma}}} \geq \sigma_2 (1-\gamma)^2 n - \frac{r\eps n}{(1-\gamma)^{\frac{s}{\gamma}}}$$
Hence, we can take $n$ large enough so that $e^c(\tau - 1) \geq 4\eps n$ for every colour, implying that the last chunk can be greedily processed. Theorem 2 then follows.
\end{proof}

\section{Proof of Lemma 1}
\begin{proof}
Take $t_2 = \frac{\sigma_2(1-\gamma)^2}{2}$.
Let us recall that the setup is that we just finished iteration $i$ with $e_i \leq t_2 n$. We then work with the graph $G_i$ we have after iteration $i$. Therefore, when we talk about vertices, we always mean a vertex in this graph. We are performing iteration $i+1$.

\begin{lemma}
There exists a constant $s_1 = s_1(\sigma_1, \sigma_2) > 0$ such that for every vertex $v$,

$$s_1 \eps \geq p_i \geq \frac{\eps \sigma_1 d(i\eps) n + d_i}{\sigma_2 d(i\eps)^2 n - e_i} \geq P_i(v)$$
\end{lemma}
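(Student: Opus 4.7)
The plan is to prove the three inequalities in turn, from the rightmost to the leftmost, since the rightmost is a direct union bound and the others are algebraic.

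First I would establish the rightmost inequality $\frac{\eps \sigma_1 d(i\eps) n + d_i}{\sigma_2 d(i\eps)^2 n - e_i} \geq P_i(v)$. During iteration $i+1$ we pick, independently for each colour $c \in C_{i+1}$, one edge uniformly at random from the $e^c(i)$ edges of colour $c$ in $G_i$. Letting $d_v^c(i)$ denote the number of edges of colour $c$ incident to $v$, the chance that the chosen edge of colour $c$ hits $v$ is $d_v^c(i)/e^c(i)$, so a union bound gives
\[
P_i(v) \leq \sum_{c \in C_{i+1}} \frac{d_v^c(i)}{e^c(i)} \leq \frac{d_v^{C_{i+1}}(i)}{\min_{c \in C_{i+1}} e^c(i)}.
\]
By the definitions of $d_i$ and $e_i$ together with $e_c(i\eps) \geq \sigma_2 d(i\eps)^2$, the numerator is at most $\eps \sigma_1 d(i\eps) n + d_i$ and the denominator is at least $\sigma_2 d(i\eps)^2 n - e_i$, giving the desired bound.

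Next I would verify the middle inequality $p_i \geq \frac{\eps \sigma_1 d(i\eps) n + d_i}{\sigma_2 d(i\eps)^2 n - e_i}$. The assumption $e_i \leq t_2 n = \frac{\sigma_2(1-\gamma)^2}{2}n$ guarantees $e_i \leq \tfrac{1}{2}\sigma_2 d(i\eps)^2 n$ since $d(i\eps) \geq 1-\gamma$, so the expansion $\frac{1}{1-x} \leq 1+2x$ valid for $x \leq 1/2$ yields
\[
\frac{1}{\sigma_2 d(i\eps)^2 n - e_i} \leq \frac{1}{\sigma_2 d(i\eps)^2 n}\Bigl(1 + \frac{2e_i}{\sigma_2 d(i\eps)^2 n}\Bigr).
\]
Multiplying this out against $\eps \sigma_1 d(i\eps) n + d_i$ produces four terms; using $\gamma = \sigma_1/\sigma_2$ the leading term is exactly $\frac{\eps \gamma}{d(i\eps)}$, two of the cross terms combine to give at most $\frac{d_i}{\sigma_2 d(i\eps)^2 n} + \frac{2\eps\gamma e_i}{\sigma_2 d(i\eps)^3 n}$, and the last term $\frac{2 d_i e_i}{\sigma_2^2 d(i\eps)^4 n^2}$ is absorbed into the second term using once again $e_i \leq \tfrac{1}{2}\sigma_2 d(i\eps)^2 n$. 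The resulting sum is precisely at most $\frac{\eps\gamma}{d(i\eps)} + \frac{2d_i d(i\eps) + 2\eps\gamma e_i}{\sigma_2 d(i\eps)^3 n} = p_i$.

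Finally, for the leftmost bound $p_i \leq s_1 \eps$, I would use the uniform lower bound $d(i\eps) \geq 1-\gamma$ to control each term: the first term is at most $\frac{\eps\gamma}{1-\gamma}$, while for $c_i$ the inputs $d_i \leq (2 + 4\sigma_1)\eps n$ (recorded explicitly just before the lemma) and $e_i \leq t_2 n$ turn the two summands of $c_i$ into multiples of $\eps$ with constants depending only on $\sigma_1,\sigma_2$. Adding these up yields a constant $s_1 = s_1(\sigma_1,\sigma_2)$ with $p_i \leq s_1 \eps$.

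The only delicate point — and the one I would be most careful about — is the cancellation in the middle inequality, where one must check that the ``extra'' summand $\frac{d_i}{\sigma_2 d(i\eps)^2 n}$ coming from the $\frac{1}{1-x} \leq 1 + 2x$ expansion is absorbed into the $\frac{2d_i d(i\eps)}{\sigma_2 d(i\eps)^3 n}$ piece of $c_i$ (so the factor of $2$ in the definition of $c_i$ is exactly what is needed); everything else is a routine estimate made possible by the hypothesis $e_i \leq t_2 n$ with $t_2 = \sigma_2(1-\gamma)^2/2$.
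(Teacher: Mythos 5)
Your argument is correct and follows the same overall structure as the paper's proof: a union bound for the rightmost inequality, an algebraic verification for the middle one, and uniform bounds $d(i\eps)\in[1-\gamma,1]$, $d_i\leq(2+4\sigma_1)\eps n$, $e_i\leq t_2 n$ for the leftmost one. The one place you diverge is the middle inequality: the paper computes the exact difference
$$\frac{\eps\sigma_1 d(i\eps)n+d_i}{\sigma_2 d(i\eps)^2 n-e_i}-\frac{\eps\gamma}{d(i\eps)}=\frac{d_i d(i\eps)+\eps\gamma e_i}{d(i\eps)\bigl(\sigma_2 d(i\eps)^2 n-e_i\bigr)},$$
where the leading terms cancel identically because $\gamma=\sigma_1/\sigma_2$, and then bounds the remaining fraction by $c_i$ via $\sigma_2 d(i\eps)^2 n-e_i\geq\tfrac12\sigma_2 d(i\eps)^2 n$; you instead expand the denominator with $\tfrac{1}{1-x}\leq 1+2x$ and handle the four resulting terms separately. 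Both routes hinge on the same observation you correctly flag — the factor $2$ in the definition of $c_i$ is exactly what is needed to absorb the lower-order correction from the $e_i$ subtraction in the denominator — and both are valid; the paper's version is marginally tighter in that the cancellation is exact rather than term-by-term, but the end result is identical.
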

\begin{proof}
Note that by a union bound on the events that a specific edge incident to $v$ that has colours in $C_{i+1}$ is chosen in Step 1 and the definitions of $d_i,e_i$, we get

$$P_i(v) \leq \frac{\max_v d_v^{C_{i+1}}(i)}{\min_{c \in C_{i+1}} e^c(i)} \leq \frac{\eps \sigma_1 d(i\eps) n + d_i}{\sigma_2 d(i\eps)^2 n - e_i}$$
Note since $e_i \leq t_2 n \leq \frac{\sigma_2 d(i\eps)^2 n}{2}$ then $\sigma_2 d(i\eps)^2 n - e_i > 0$ and so the last inequality is valid. Further, we also have that
$$\frac{\eps \sigma_1 d(i\eps) n + d_i}{\sigma_2 d(i\eps)^2 n - e_i} - \frac{\eps \gamma}{d(i\eps)} = \frac{d_i d(i\eps) + \eps \gamma e_i}{d(i\eps)(\sigma_2 d(i\eps)^2 n - e_i)} \leq c_i $$ where we are using that $\gamma = \frac{\sigma_1}{\sigma_2}$ and that $e_i \leq \frac{\sigma_2 d(i\eps)^2 n}{2}$. Hence, 
$$p_i = c_i + \frac{\eps \gamma}{d(i\eps)} \geq \frac{\eps \sigma_1 d(i\eps) n + d_i}{\sigma_2 d(i\eps)^2 n - e_i}$$
Finally, note that by recalling that $d_i \leq (2+4\sigma_1)\eps n $ and using that $e_i \leq \frac{\sigma_2 d(i\eps)^2 n}{2}$ and that $d(i\eps) \in [1-\gamma,1]$, 
$$c_i = \frac{2d_i d(i\eps) + 2\eps \gamma e_i}{\sigma_2 d(i\eps)^3 n} \leq (\frac{2 (2+4\sigma_1) + \gamma \sigma_2}{\sigma_2 (1-\gamma)^3})\eps$$
Thus, take $s_1 = \frac{2 (2+4\sigma_1) + \gamma \sigma_2}{\sigma_2 (1-\gamma)^3} + \frac{\gamma}{1-\gamma}$ to get $p_i = c_i + \frac{\eps \gamma}{d(i\eps)} \leq s_1 \eps$.
\end{proof}
Note this Lemma 4 shows that if $n$ is large enough, we can find probabilities $Q_i(v)$ for every vertex $v$. Indeed, by the Lemma 4, we get that $\frac{1}{2} \geq p_i \geq \max_v P_i(v)$ and so, we can always find $Q_i(v)$ uniquely for every vertex $v$. 
\begin{lemma}
There exists a constant $s_2 = s_2(\sigma_1, \sigma_2) > 0$ such that with probability $1-o(1)$, 
$$|\Phi_{i+1}| \leq s_2 \eps^2 n$$
\end{lemma}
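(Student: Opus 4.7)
The plan is to bound $\mathbb{E}[|\Phi_{i+1}|]$ via a union bound and then upgrade to a high-probability estimate using a martingale concentration inequality. For the expectation, fix a colour $c \in C_{i+1}$ and condition on its chosen edge $e_c = uv$; then $c \in \Phi_{i+1}$ only if some other $c' \in C_{i+1}$ has its uniformly chosen edge incident to $u$ or $v$. Writing $d_u^{c'}(i)$ for the number of colour-$c'$ edges incident to $u$ in $G_i$, a union bound over $c' \neq c$ gives
\[
\mathbb{P}(c \in \Phi_{i+1} \mid e_c = uv) \leq \sum_{c' \neq c}\frac{d_u^{c'}(i) + d_v^{c'}(i)}{e^{c'}(i)} \leq \frac{d_u^{C_{i+1}}(i) + d_v^{C_{i+1}}(i)}{\min_{c' \in C_{i+1}} e^{c'}(i)} \leq 2 p_i \leq 2 s_1 \eps,
\]
where the penultimate inequality reuses exactly the estimate from the proof of Lemma 4, valid because $e_i \leq t_2 n$. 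Summing over $c$ yields $\mathbb{E}[|\Phi_{i+1}|] \leq 2 s_1 \eps^2 n$.

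To obtain concentration, I view $|\Phi_{i+1}|$ as a function of the independent uniform choices $\chi_c$ (one per colour $c \in C_{i+1}$) and aim for a Lipschitz bound suitable for Azuma's inequality. Altering a single $\chi_{c_0}$ can only flip the status of $c_0$ itself and of those colours $c'$ whose chosen edge touches an endpoint of the old or new $e_{c_0}$. To control this, let $A_v$ denote the number of colours in $C_{i+1}$ whose chosen edge is incident to $v$: this is a sum of independent Bernoullis with $\mathbb{E}[A_v] \leq s_1 \eps$, so a Chernoff bound combined with $\eps \leq 1/\log\log n$ yields $\mathbb{P}(A_v \geq \log n) = n^{-\omega(1)}$. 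A union bound over the $O(n^2)$ vertices then shows that the event $\mathcal{E} := \{A_v \leq \log n \text{ for every vertex } v\}$ holds with probability $1 - o(1)$.

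On $\mathcal{E}$, changing any single $\chi_{c_0}$ alters $|\Phi_{i+1}|$ by at most $4 \log n + 1 = O(\log n)$. Applying Azuma's inequality (legitimized either by Warnke's typically-bounded-differences variant, or by first redefining $|\Phi_{i+1}|$ off $\mathcal{E}$ so that it is globally $O(\log n)$-Lipschitz) with $k = |C_{i+1}| \leq \eps n$ coordinates and Lipschitz constant $O(\log n)$, gives
\[
\mathbb{P}\bigl(|\Phi_{i+1}| \geq \mathbb{E}[|\Phi_{i+1}|] + \eps^2 n\bigr) \leq 2 \exp\!\left(-\Omega\!\left(\frac{\eps^3 n}{\log^2 n}\right)\right) + o(1).
\]
Since $\eps \geq 1/(2\log\log n)$, the exponent tends to infinity, so the right-hand side is $o(1)$; setting $s_2 := 2 s_1 + 1$ yields $|\Phi_{i+1}| \leq s_2 \eps^2 n$ with probability $1 - o(1)$, as required. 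The main obstacle is precisely this Lipschitz step: the natural per-coordinate bound only holds on the high-probability event $\mathcal{E}$, so some care is needed in marrying it with Azuma — but this is a standard maneuver, and the numerics are comfortable given how slowly $\eps$ is allowed to decay.
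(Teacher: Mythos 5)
Your expectation bound is fine and arrives at the same order $\eps^2 n$ as the paper, though via conditioning on $e_c$ rather than the paper's route of bounding $|\Phi_{i+1}| \leq \sum_u X_u(X_u-1)$ where $X_u$ counts chosen edges at $u$.

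The concentration step is where you part ways with the paper, and the truncation machinery you invoke is actually unnecessary — you have missed the crucial combinatorial observation. You worry that altering one $\chi_{c_0}$ could flip the status of all colours whose chosen edge touches an endpoint of the old or new $e_{c_0}$, which could be $\Theta(\log n)$ many. But almost none of those colours can change status: if two or more chosen edges (other than $e_{c_0}$) touch the same vertex $u$, they already collide \emph{with each other}, so they are in $\Phi_{i+1}$ regardless of $e_{c_0}$. Hence removing $e=uv$ from the picture can drop out at most one colour at $u$ (the unique one, if any, whose edge collides only with $e$) and at most one at $v$; adding $e'=u'v'$ similarly brings in at most one at $u'$ and one at $v'$; and $c_0$ itself contributes $\pm 1$. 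Tracking the signed change, the net deviation of $|\Phi_{i+1}|$ under a single coordinate change is bounded by an absolute constant (the paper uses $3$). With that Lipschitz constant, Azuma on the $\eps n$ independent edge choices directly gives $\mathbb{P}(\bigl||\Phi_{i+1}| - \mathbb{E}|\Phi_{i+1}|\bigr| \geq \sqrt{\eps n}\log n) \leq 2 n^{-\log n/18}$, and $\sqrt{\eps n}\log n = o(\eps^2 n)$ finishes it — no conditioning event $\mathcal{E}$, no Warnke, no function extension.

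Beyond being overkill, your route has a soft spot: the tail $\mathbb{P}(A_v \geq \log n) = n^{-\omega(1)}$ does \emph{not} follow from the Hoeffding-form Chernoff bound stated in Section 2 (with $\eps n$ summands, that bound gives $\exp(-\Theta(\log^2 n/(\eps n)))$, which tends to $1$); you would need the multiplicative/Poisson form. And the ``redefine $|\Phi_{i+1}|$ off $\mathcal{E}$'' step requires constructing a globally Lipschitz extension and controlling the shift in mean — doable, but not automatic. All of this evaporates once you notice the $O(1)$ Lipschitz property, which is the real content of the paper's concentration step.
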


\begin{proof}
For every vertex $u$, define $X_u$ to be the number of chosen edges (in Step 1 of iteration $i+1$) incident to $u$. Let also $Y_u := X_u 1_{X_u \geq 2}$. This counts the number of edge collisions that the vertex $u$ creates. Since $|\Phi_{i+1}|$ counts the number of colours whose chosen edge collides with others, we have that $|\Phi_{i+1}| \leq \sum_{u} Y_u$ where the sum is over all vertices $u$. Moreover, note that for every such $u$, by definition, $Y_u \leq X_u(X_u - 1)$ giving that $|\Phi_{i+1}| \leq \sum_{u} X_u(X_u - 1)$. 
\\
Denote the set of edges of colours in the chunk $C_{i+1}$ which are incident to $u$ (after iteration $i$) by $\Gamma_u^{C_{i+1}} (i)$, so that $d_u^{C_{i+1}}(i) = |\Gamma_u^{C_{i+1}} (i)|$. Then, note that for distinct edges $e_1, e_2$ in $\Gamma_u^{C_{i+1}} (i)$, 
$$\mathbb{P} (e_1,e_2 \text{ are both chosen}) \leq \frac{1}{(\min_{c \in C_{i+1}} e^c(i))^2} \leq \frac{1}{t_2^2 n^2}$$ where we are using that $e^c(i) \geq e_c(i\eps)n - e_i \geq \sigma_2 (1-\gamma)^2 n - e_i \geq  \sigma_2 (1-\gamma)^2 n - t_2 n = t_2 n$ by definition of $t_2$. Further, $X_u(X_u-1)$ counts the number of ordered pairs of distinct chosen edges in $\Gamma_u^{C_{i+1}} (i)$. Thus, by a union bound on the events that a specific pair is chosen, we can see that $\mathbb{E} [X_u(X_u-1)] \leq \frac{d_u^{C_{i+1}}(i) ^2}{t_2^2 n^2}$. Using that $d_u^{C_{i+1}} (i) \leq 2\eps n$ (which we have since $|C_{i+1}| \leq \eps n$ and each colour class is a disjoint union of $K_3$'s and $K_2$'s), we can also see that $\mathbb{E} [X_u(X_u-1)] \leq (\frac{2 \eps}{t_2^2 n})d_u^{C_{i+1}}(i)$. Summing up over the vertices we get 
$$\mathbb{E}[|\Phi_{i+1}|] \leq \frac{2 \eps}{t_2^2 n} \sum_{u} d_u^{C_{i+1}}(i) \leq \frac{2 \eps}{t_2^2 n} \times 2 \eps n \times \max_{c \in C_{i+1}} e^c(i) \leq \frac{16}{t_2^2} \eps^2 n$$
the second inequality following by the Handshaking Lemma and the last inequality following since $e^c(i) \leq e_c \leq n_c \leq 4n$.
\\
We now prove concentration. Note that $|\Phi_{i+1}|$ is a function of the $\eps n$ chosen edges in Step 1. Further, changing a chosen edge of a certain colour in $C_{i+1}$ will deviate $|\Phi_{i+1}|$ by at most $3$. Hence, by Azuma's Inequality, we have 

$$\mathbb{P} (||\Phi_{i+1}| - \mathbb{E}[|\Phi_{i+1}|]| \geq \sqrt{\eps n} \log n) \leq \frac{2}{n^{\frac{\log n}{18}}}$$

Since $\sqrt{\eps n} \log n = o(\eps^2 n)$ (as $\eps \asymp \frac{1}{\log \log n}$), we have with probability $1-o(1)$, 
$$|\Phi_{i+1}| \leq s_2 \eps^2 n$$
where $s_2 := \frac{16}{t_2^2} + 1$.
\end{proof}
Let's introduce some notation for the next Lemma. Define for a colour $c$, $e^c(i+\frac{1}{2})$ to be the number of edges of that colour that still remain after Step 3 of iteration $i+1$. Similarly, define $d_v^{C_j} (i+\frac{1}{2})$.

\begin{lemma}
There exists a constant $s_3 = s_3(\sigma_1, \sigma_2) > 0$ such that with probability $1-o(1)$, we have:
\begin{itemize}
    \item $|e^c(i+\frac{1}{2}) - e^c(i)(1-2p_i)| \leq s_3(\eps^2 n + |\Phi_{i+1}|)$ for every unprocessed colour $c$ after Step 3, that is every $c \in \Phi_{i+1} \cup \cup_{j=i+2}^{\tau} C_j$.
    \item $d_v^{C_j} (i+\frac{1}{2}) \leq d_v^{C_j} (i)(1-p_i) + s_3 \eps^3 n$ for every vertex $v$ surviving Steps 1,2 and 3 (of iteration $i+1$) and $j > i+1$.
\end{itemize}
\end{lemma}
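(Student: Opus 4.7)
The plan is to compute the expected values of $e^c(i+\tfrac{1}{2})$ and $\mathbf{1}\{v\in G_{i+\frac{1}{2}}\}\,d_v^{C_j}(i+\tfrac{1}{2})$, prove concentration around these means, and finish with a union bound, mirroring the strategy used for Lemma 5. The randomness in iteration $i+1$ decomposes as follows: the chosen edges $\{X_{c'}\}_{c' \in C_{i+1}}$ of Step 1 and the independent Bernoulli bits $\{Z_v\}$ of Step 3 are all mutually independent.

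For the expectation, I would condition on Step 1 so that the set $S$ of vertices surviving Step 2 becomes deterministic; then for a colour-$c$ edge $uv$ one has $\mathbb{P}(uv \in G_{i+\frac{1}{2}} \mid \text{Step 1}) = \mathbf{1}\{u,v \in S\}(1-Q_i(u))(1-Q_i(v))$. Taking outer expectation, independence of chosen edges across colours yields $\mathbb{P}(u,v \text{ both not marked}) = \prod_{c' \in C_{i+1}}\bigl(1 - \frac{d_u^{c'}(i)+d_v^{c'}(i)-d_{uv}^{c'}(i)}{e^{c'}(i)}\bigr)$, which differs from $(1-P_i(u))(1-P_i(v))$ only by per-colour corrections of order $\mu/(\sigma_2 n)$ (exploiting $d_{uv}^{c'}(i) \leq \mu$). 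The ``marked but not killed'' event (i.e.\ colliding markings) contributes a further error of order $\mathbb{E}|\Phi_{i+1}|/n$. Using the defining identity $p_i = P_i(v) + (1-P_i(v))Q_i(v)$, one obtains $\mathbb{P}(uv \in G_{i+\frac{1}{2}}) = (1-p_i)^2 + O(\eps^2/n + \mathbb{E}|\Phi_{i+1}|/n^2)$; summing over the $O(n)$ colour-$c$ edges and writing $(1-p_i)^2 = (1-2p_i) + O(\eps^2)$ gives $\mathbb{E}[e^c(i+\tfrac{1}{2})] = e^c(i)(1-2p_i) + O(\eps^2 n + \mathbb{E}|\Phi_{i+1}|)$. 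The second-bullet expectation is handled analogously with one endpoint pinned to $v$.

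For concentration I would proceed in two stages. First, \emph{conditional on Step 1}, $e^c(i+\tfrac{1}{2})$ is a function of at most $n_c \leq 4n$ relevant independent bits $Z_v$, each changing the value by at most $2\mu$ (the colour-$c$ edges at $v$); Azuma's inequality then gives tail $2\exp(-\Omega(\eps^4 \log^4 n)) = o(n^{-4})$ for deviation $\eps^2 n/2$. Second, one shows that $\mathbb{E}[e^c(i+\tfrac{1}{2}) \mid \text{Step 1}]$ concentrates around $\mathbb{E}[e^c(i+\tfrac{1}{2})]$ as the $X_{c'}$ vary, by bounding the Lipschitz constant per chosen edge by $O(\mu)$ times the number of vertices whose Step-2 killed status flips, a quantity controlled via the edge-multiplicity bound. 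A union bound over the $O(n)$ colours (first bullet) and the $O(n^3)$ pairs $(v,j)$ (second bullet) then closes the argument.

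The main obstacle is bounding the Lipschitz constant in the second concentration stage: a change in $X_{c'}$ can cascade through the collision structure, flipping the ``killed in Step 2'' status of any vertex $w$ marked by some colour $c'' \in C_{i+1} \setminus \{c'\}$ whose chosen edge $X_{c''}$ happens to share a vertex with the old or new $X_{c'}$. Controlling this cascade requires the full strength of $\mu = \sqrt{n}/\log^2 n$ to cap the number of chosen edges incident to any fixed vertex. A secondary technical point is the inclusion-exclusion in the expectation computation, where $d_{uv}^{c'}(i) \leq \mu$ is again essential to tame the correction between $\mathbb{P}(u,v \text{ both not marked})$ and $(1-P_i(u))(1-P_i(v))$.
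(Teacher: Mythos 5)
Your overall plan (expectation, two-source concentration, union bound) matches the paper's strategy, but there is a genuine gap exactly where you flag it, and the cascade you worry about is not in fact tamed by the edge-multiplicity bound. The number of \emph{distinct} chosen edges incident to a vertex is not controlled by $\mu$ (it is bounded by the number of colours in $C_{i+1}$ incident to that vertex, which can be as large as $\Theta(\eps n)$ in the worst case); so when you change a single $X_{c'}$ from $e_1$ to $e_2$, the set of chosen edges whose ``collides'' status flips can be large, the set of vertices whose ``killed in Step 2'' status flips can be large, and consequently $\mathbb{E}\bigl[e^c(i+\tfrac12)\mid\text{Step 1}\bigr]$ does not have the $O(\mu)$-bounded differences that your second Azuma application needs. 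A deterministic worst-case Lipschitz bound simply does not exist for the ``killed'' indicator viewed as a function of the chosen-edge vector, and Azuma cannot be applied to a quantity whose conditional expectation is only ``typically'' stable.

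The paper sidesteps this by never trying to concentrate the ``killed'' event. Instead it replaces ``deleted in Steps 1--3'' (i.e.\ killed or zapped) by ``\textbf{condemned}'' (i.e.\ marked or zapped) and observes the \emph{deterministic, pointwise} inequality
\[
\bigl|e^c(i+\tfrac12) - \bigl(e^c(i) - L_c - 2T^{(1)}_c - T^{(2)}_c\bigr)\bigr| \leq 10\,|\Phi_{i+1}|,
\]
where $L_c, T^{(1)}_c, T^{(2)}_c$ count $K_2$ and $K_3$ components of colour $c$ with prescribed numbers of \emph{condemned} vertices; the $O(|\Phi_{i+1}|)$ slack is exactly the set of vertices that are marked but not killed and not zapped, all of which are incident to colliding chosen edges. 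This is the origin of the $|\Phi_{i+1}|$ term in the lemma statement, and it cannot be recovered by an expectation computation alone: it must be a worst-case bound. Once you are counting condemned vertices, the concentration argument \emph{does} decouple cleanly, because ``condemned'' $=$ ``marked'' $\cup$ ``$Z_v=1$''. The paper writes $L_c = L'_c + L''_c - L'''_c$ (and similarly for the $T$'s): $L'_c$ depends only on the independent bits $Z_v$ (Chernoff), $L''_c$ depends only on the chosen edges and has genuinely bounded differences of $2$ per swap because ``marked'' changes on at most four vertices when one chosen edge changes (Azuma), and the cross term $L'''_c$ is handled by conditioning on $Z$ and applying Azuma again with a $\sigma(Z)$-measurable bound on its conditional mean. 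There is no cascade to control. For the degree bullet, the paper likewise replaces ``deleted'' by ``condemned'' via $D_1^{v,j}$, and the discrepancy term $D_2^{v,j}$ is bounded by $D_3^{v,j}$, a sum over ``marked but not killed'' neighbours whose expectation is $O(p_i^2 d_v^{C_j}(i))$ by a separate small lemma; here and only here is $\mu$ used, to bound the per-swap change of $D_3^{v,j}$ by $4\mu$. You should revise your argument to pass through condemned-vertex counts and the pointwise $|\Phi_{i+1}|$ correction rather than trying to concentrate the surviving-edge count directly.
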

\begin{proof}
This is the most technical part of the proof and we include it in the Appendix.
\end{proof}
\begin{lemma}
There exists a constant $s_4 = s_4(\sigma_1, \sigma_2) > 0$ such that with probability $1-o(1)$, we can successfully perform iteration $i+1$ and get 
\begin{itemize}
    \item $|e^c(i+1) - e^c(i)(1-2p_i)| \leq s_4\eps^2 n$ for every $c \in \cup_{j=i+2}^{\tau} C_j$.
    \item $d_v^{C_j} (i+1) \leq d_v^{C_j} (i)(1-p_i) + s_3 \eps^3 n$ for every vertex $v$ that survives iteration $i+1$ (that is, $v \in V(G_{i+1})$) and $j > i+1$.
\end{itemize}
\end{lemma}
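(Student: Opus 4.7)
My plan is to derive Lemma 7 from Lemma 6 (which controls parameters just after Step 3) together with a short analysis of the effect of Step 4, where the greedy processing of the colliding colours $\Phi_{i+1}$ takes place. The one nontrivial point is to verify that Step 4 can actually be performed; once this is in place, the bounds will follow almost for free. All assertions hold on the intersection of the high-probability events from Lemmas 5 and 6, which by a union bound still has probability $1-o(1)$; the existence of the probabilities $Q_i(v)$ needed to launch iteration $i+1$ is guaranteed deterministically by Lemma 4.

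To see that Step 4 is feasible, I would invoke Lemma 6 to obtain, for every $c \in \Phi_{i+1}$,
$$e^c(i+\tfrac{1}{2}) \geq e^c(i)(1-2p_i) - s_3(\eps^2 n + |\Phi_{i+1}|).$$
Since $e^c(i) \geq e_c(i\eps)n - e_i \geq \sigma_2(1-\gamma)^2 n - t_2 n = t_2 n$ by the choice $t_2 = \sigma_2(1-\gamma)^2/2$, and since $p_i \leq s_1 \eps$ by Lemma 4 and $|\Phi_{i+1}| \leq s_2\eps^2 n$ by Lemma 5, one gets $e^c(i+\tfrac{1}{2}) \geq t_2 n/2$ for $n$ large enough. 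This dwarfs $4|\Phi_{i+1}| \leq 4s_2\eps^2 n$, and the greedy argument recalled in Section 4 requires precisely that every unprocessed colour class contain more than four times as many vertices as there are unprocessed colours. Hence Step 4 completes and iteration $i+1$ is successful.

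For the quantitative bounds I would track the cost of Step 4 directly. It deletes at most $2|\Phi_{i+1}|$ vertices, and since every colour class is a disjoint union of $K_2$'s and $K_3$'s, each deleted vertex is incident to at most two edges of any fixed colour. Thus at most $4|\Phi_{i+1}| \leq 4s_2\eps^2 n$ edges of any colour $c \in \cup_{j=i+2}^{\tau} C_j$ disappear in Step 4, and combining with Lemma 6,
$$|e^c(i+1) - e^c(i)(1-2p_i)| \leq s_3(\eps^2 n + |\Phi_{i+1}|) + 4|\Phi_{i+1}| \leq s_4 \eps^2 n,$$
with $s_4 := s_3(1+s_2) + 4s_2$. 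The degree bound is even easier: if $v \in V(G_{i+1})$ then in particular $v$ survives Step 4, and since Step 4 only deletes vertices, $d_v^{C_j}(i+1) \leq d_v^{C_j}(i+\tfrac{1}{2})$, to which the conclusion of Lemma 6 applies directly.
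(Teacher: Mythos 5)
Your proof is correct and essentially identical to the paper's: both invoke Lemma 6 together with the lower bound $e^c(i) \geq t_2 n$ and the bounds from Lemmas 4 and 5 to show $e^c(i+\tfrac12) \gg 4|\Phi_{i+1}|$, so Step 4's greedy completion works, and both then observe that Step 4 deletes at most $2|\Phi_{i+1}|$ vertices, hence at most $4|\Phi_{i+1}|$ edges of any remaining colour, and never increases any $d_v^{C_j}$. You even arrive at the same constant $s_4 = s_3(1+s_2) + 4s_2$.
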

\begin{proof}
We first need to show that Step 4 is successfully performed (note by Lemma 4, we have already shown that the algorithm doesn't break initially when we are assigning the probabilities $Q_i(u)$). Indeed, by Lemmas 5 and 6, we have that with probability $1-o(1)$, 
\begin{itemize}
    \item $|e^c(i+\frac{1}{2}) - e^c(i)(1-2p_i)| \leq (s_3+s_3s_2)\eps^2 n$ for every $c \in \Phi_{i+1} \cup \cup_{j=i+2}^{\tau} C_j$.
    \item $d_v^{C_j} (i+\frac{1}{2}) \leq d_v^{C_j} (i)(1-p_i) + s_3 \eps^3 n$ for every vertex $v$ that survives Steps 1,2 and 3 and $j > i+1$.
\end{itemize}
Note that from the first bound, using Lemma 4 (which in particular also shows that for $n$ large $p_i \leq s_1 \eps \leq \frac{1}{2}$) and that $e^c(i) \geq t_2 n$ (which follows by $e_i \leq t_2 n$ as we've seen in the proof of Lemma 5), we can get that
$e^c(i+\frac{1}{2}) \geq e^c(i)(1-2p_i) - (s_3+s_3s_2)\eps^2 n \geq t_2 n(1-2p_i) - (s_3 +s_3s_2)\eps^2 n \geq t_2 n - (2s_1 t_2 + s_3 \eps + s_3 s_2 \eps)\eps n$.
\\
Therefore, since $\eps \rightarrow 0$ when $n \rightarrow \infty$ we have that if $n$ is large enough, then $e^c(i+\frac{1}{2}) \geq 4\eps n \geq 4|\Phi_{i+1}|$ and so, we can greedily process the colours in $\Phi_{i+1}$, that is, Step 4 is successfully performed and so, the iteration is successfully performed.

Further, when we do perform Step 4, we greedily delete $2|\Phi_{i+1}|$ vertices from the graph. Therefore, we have that $d^{C_j}_v (i+1) \leq d^{C_j}_v (i+\frac{1}{2})$ for every vertex $v$ surviving the iteration and $j > i+1$ as well as $|e^c(i+1) - e^c(i+\frac{1}{2})| \leq 4|\Phi_{i+1}| \leq 4s_2 \eps^2 n$ (by Lemma 5) for every colour $c \in \cup_{j=i+2}^{\tau} C_j$. Therefore, Lemma 7 follows by taking $s_4 := 4s_2 + s_3 + s_3s_2$.

\end{proof}

To conclude the proof of Lemma 1, define $t_3 = s_3$, $t_4 = \frac{16}{\sigma_2} - 2 > 0$ and $t_5 = s_4 + 4\gamma^2$.
Note then that if the bounds in Lemma 7 are satisfied, then we have the following:

\begin{itemize}
    \item [i)] Take a vertex $v$ surviving iteration $i+1$ and $j > i+1$. Then, $d_v^{C_j} (i+1) - \eps \sigma_1 d((i+1)\eps)n \leq d_v^{C_j} (i)(1-p_i) + s_3 \eps^3 n - \eps \sigma_1 d(i\eps) n(1-p_i) + \eps \sigma_1 n(\gamma \eps - p_i d(i\eps)) \leq d_i(1-p_i) + s_3\eps^3 n $, where we have used that $p_i \geq \frac{\eps \gamma}{d(i\eps)}$ and $d((i+1)\eps) = d(i\eps) - \gamma \eps$. Therefore, $d_{i+1} \leq d_i + s_3 \eps^3 n$. 
    
    \item [ii)] Take a colour $c \in \cup_{j=i+2}^{\tau} C_j$. Note that by the bound in Lemma 7 and the triangle inequality, $|e^c(i+1) - e_c((i+1)\eps) n| \leq s_4 \eps^2 n + |e^c(i)(1-2p_i) - e_c((i+1)\eps) n|$. Further, this second term is equal to $|(e^c(i)-e_c(i\eps)n)(1-2p_i) - 2p_i e_c(i \eps)n + (e_c(i \eps) - e_c((i+1) \eps))n|$. Since $e_c(i \eps) - e_c((i+1) \eps) = e_c(0)\gamma \eps (2d(i\eps) - \gamma \eps)$,  $p_i = \frac{\eps \gamma}{d(i\eps)} + c_i$ and $e_c(0)n = e_c \leq n_c \leq 4n$ this is at most $|(e^c(i)-e_c(i\eps)n)(1-2p_i) - 2c_i e_c(i\eps) n| + 4 \gamma^2 \eps ^2 n \leq e_i(1-2p_i) + 2c_i e_c(i\eps)n + 4 \gamma^2 \eps ^2 n$. Recalling the definition of $c_i$, note that
    $$2 c_i e_c(i\eps) n = \frac{4e_c(0) d_i}{\sigma_2} + \frac{4 \eps \gamma e_c(0) e_i}{\sigma_2 d(i\eps)} \leq \frac{16 d_i}{\sigma_2} + \frac{16}{\sigma_2} \frac{\eps \gamma e_i}{d(i\eps)}$$
    Therefore, since $p_i \geq \frac{\eps \gamma}{d(i\eps)}$, we have
    $$e_{i+1} \leq e_i(1+(\frac{16}{\sigma_2} - 2)\frac{\eps \gamma}{d(i\eps)}) + (s_4 + 4\gamma^2)\eps^2 n + \frac{16 d_i}{\sigma_2}$$
\end{itemize}
By how $t_3,t_4, t_5$ where defined, this gives us what we need, thus finishing the proof of Lemma 1.
\end{proof}

\section{Concluding Remarks}
In this paper we showed that every multigraph  with  maximum edge-multiplicity  at most $\frac{\sqrt{n}}{\log^2 n}$, edge-coloured by $n$ colours such that each colour class  is a disjoint union of cliques with at least $2n + o(n)$ vertices  has a full rainbow matching.  It would be interesting to know  what is the right  multiplicity bound? For general multigraphs, 
the graph composed of $n$ disjoint  triangles with each edge of multiplicity $n-1$, and all edges of a $i$th triangle being  of colour $i$ shows that  there is no full rainbow  matching. We suspect that our bound on the multipliclity bound on the edges is close to the right answer up to the logarithmic factor.  In particular,  we would like to pose the following problem.

\textbf{Problem 1:} Is there a multigraph $G$  with  maximum edge-multiplicity of at most $\sqrt{n}$, edge-coloured by $n$ colours such that each colour class  has at least $2n+o(n)$ vertices and is a disjoint union of non-trivial cliques,  contains no full rainbow matching?

Another problem is to improve the asymptotic  error term on the number of vertices in a colour class. We note that  our proof  can be modified so that the result holds when the size of each colour class is $2n+n^{1-\alpha}$, for some absolute $\alpha>0$. So we would like to ask for a sub-polynomial improvement.  The question below is natural to ask because of the known lower bound~\cite{HS} on the Brualdi-Ryser-Stein conjecture.

\textbf{Problem 2:} Is  it true that  for some constant $C>0$  every simple graph $G$ edge-coloured by $n$ colours such that each colour class  has at least $2n+C\log^2n$ vertices and is a disjoint union of non-trivial cliques, contains a full rainbow matching?

\bibliographystyle{amsplain}









\section{Acknowledgements}
The first author would like to thank the support provided by the London Mathematical Society and the Mathematical Institute at the University of Oxford through an Undergraduate Research Bursary.   The second author would like to thank Alexey Pokrovskiy for introducing her to Grinblat's original problem on multigraphs and for valuable discussions on the topic.

\bibliographystyle{abbrv}
\bibliography{grinblat}




\section{Appendix: Proof of Lemma 6}

\begin{proof} Take a colour $c \in \Phi_{i+1} \cup \cup_{j=i+2}^{\tau} C_j$. Let us recall that we are in iteration $i+1$. With that in mind, let $T^{(1)}_c,T^{(2)}_c,L_c$ be respectively, the number of $K_3$,$K_3$,$K_2$ components in the colour class of $c$ with at least $1$,$2$,$1$ condemned vertices. From now on, denote $a^c(i),b^c(i)$ by respectively, the number of $K_3$'s, $K_2$'s in the colour class of colour $c$ after iteration $i$. Similarly as before, define $a^c(i+\frac{1}{2}),b^c(i+\frac{1}{2})$ to be respectively, the number of $K_3$'s,$K_2$'s in the colour class of $c$ after Steps 1,2 and 3.
\

Note that a vertex doesn't survive the first 3 steps (i.e, is deleted in the first 3 steps) if and only if it is killed (in Step 2) or zapped (in Step 3). Moreover, this is equivalent to the vertex being condemned with the exception that it can't be simultaneously marked, not killed and not zapped. However, by the definition of the algorithm, this in particular implies that such a vertex which is marked, not killed and not zapped is incident to a chosen edge from a colour in $\Phi_{i+1}$, that is, a chosen edge which is involved in collisions. Thus, by noting that a monochromatic clique survives if none of its vertices are deleted and that a monochromatic $K_3$ becomes a $K_2$ if exactly one of its vertices is deleted, we can get the following bounds: $|a^c(i+\frac{1}{2}) - (a^c(i) - T^{(1)}_c)| \leq 2|\Phi_{i+1}|$ and $|b^c(i+\frac{1}{2}) - (b^c(i) - L_c + T^{(1)}_c - T^{(2)}_c)| \leq 4|\Phi_{i+1}|$. Hence, since $e^c(i) = 3a^c(i) + b^c(i)$ and similarly for $e^c(i+\frac{1}{2})$, then 
\begin{equation} \label{ineq:5}
\left|e^c(i+\frac{1}{2}) - (e^c(i) - L_c - 2T^{(1)}_c - T^{(2)}_c)\right| \leq 10 |\Phi_{i+1}|    
\end{equation}

\begin{lemma}
Let $u,v$ be two distinct vertices. Then if $n$ is large enough, the probability that they are both condemned is at most $4p_i^2$. 
\end{lemma}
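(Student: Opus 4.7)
The plan is to compute $P(u,v\text{ both cond})$ almost exactly by exploiting the structure of how condemnation is defined: the zapping bits $Z_u,Z_v$ are independent Bernoullis with parameters $Q_i(u),Q_i(v)$, independent of each other and of the marking events $M_u,M_v$. Writing $\mathbf{1}[w\text{ cond}]=1-(1-\mathbf{1}_{M_w})(1-\mathbf{1}_{Z_w})$ and taking the expectation of the product for $u$ and $v$, the defining identity $(1-P_i(w))(1-Q_i(w))=1-p_i$ collapses everything into
\[
P(u,v\text{ both cond}) \;=\; p_i^{2} \;+\; \frac{(1-p_i)^{2}}{(1-P_i(u))(1-P_i(v))}\bigl(q_{uv}-P_i(u)P_i(v)\bigr),
\]
where $q_{uv}:=P(M_u\cap M_v)$. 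Since $a,b\le p_i$ gives $(1-a)(1-b)\ge(1-p_i)^{2}$, the prefactor is at most $1$, so the task reduces to showing that the ``covariance'' $|q_{uv}-P_i(u)P_i(v)|$ is $o(p_i^{2})$.

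The key structural step is that Step~1 chooses edges of distinct colours \emph{independently}, so any event depending only on the per-colour choice factorises across $c\in C_{i+1}$. Writing $p_u^c=d_u^c(i)/e^c(i)$ and $p_{uv}^c=m_{uv}^c/e^c(i)$ with $m_{uv}^c\in\{0,1\}$ the indicator that $uv$ is a colour-$c$ edge, this gives
\[
P(\bar M_u\cap\bar M_v)=\prod_{c}(1-p_u^c-p_v^c+p_{uv}^c), \qquad (1-P_i(u))(1-P_i(v))=\prod_{c}(1-p_u^c-p_v^c+p_u^c p_v^c).
\]
A telescoping expansion of the difference of two products of $[0,1]$-valued factors bounds $|q_{uv}-P_i(u)P_i(v)|$ by $\sum_{c}|p_{uv}^c-p_u^c p_v^c|\le\sum_{c}p_{uv}^c+\sum_{c}p_u^c p_v^c$, so the problem is now to show each of these two sums is $o(p_i^{2})$.

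Both error sums are small by the standing hypotheses. The multiplicity bound gives $\sum_c m_{uv}^c\le\mu=\sqrt n/\log^{2}n$, and the assumption $e_i\le t_2 n$ yields $e^c(i)\ge t_2 n$ for every unprocessed colour $c$ (exactly as used in the proof of Lemma~5), so $\sum_c p_{uv}^c\le \mu/(t_2 n)$. For the other sum, since every colour component is a $K_2$ or $K_3$ we have $d_v^c\le 2$, hence $\max_c p_v^c\le 2/(t_2 n)$, and $\sum_c p_u^c\le p_i$ by the same bound used in Lemma~4; together these give $\sum_c p_u^c p_v^c\le 2p_i/(t_2 n)$. Since $p_i\ge\eps\gamma/d(i\eps)=\Omega(1/\log\log n)$ while both error terms are $O(1/(\sqrt n\,\log^{2}n))$, they are $o(p_i^{2})$. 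Combining everything yields $P(u,v\text{ both cond})\le p_i^{2}+o(p_i^{2})\le 4p_i^{2}$ once $n$ is sufficiently large.

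The main obstacle is that $M_u$ and $M_v$ are genuinely positively correlated whenever $u$ and $v$ share a $K_2$ or $K_3$ component of some colour, for then choosing the $uv$-edge of that colour marks both vertices simultaneously and makes $p_{uv}^c>p_u^c p_v^c$. The multiplicity hypothesis $\mu\le\sqrt n/\log^{2}n$ is precisely the tool that caps the number of such ``shared'' colours and thereby keeps the aggregate correlation at the $o(p_i^{2})$ scale; this is the only place in the argument where the $\sqrt n/\log^{2}n$ bound on edge multiplicity is truly used.
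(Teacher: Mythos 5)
Your argument is correct and takes a genuinely different route from the paper. The paper's proof is a straightforward union bound: it writes the event as ``$u,v$ both marked'' plus ``one condemned, one zapped,'' bounds the first by enumerating ordered pairs of chosen edges incident to $u$ and $v$ (splitting into distinct-colour pairs, which contribute $\leq p_i^2$ by independence of per-colour choices, and same-colour pairs, which force the edge to be a $uv$-edge and therefore contribute $\leq \mu/(t_2 n)$), and bounds the second by $p_i(Q_i(u)+Q_i(v)) \leq 2p_i^2$. You instead observe that ``condemned'' $= M_w \cup \{Z_w=1\}$ and that the $Z$'s are jointly independent of the marking, which collapses the calculation into the exact identity $\mathbb{P}(u,v\text{ both cond}) = p_i^2 + \frac{(1-p_i)^2}{(1-P_i(u))(1-P_i(v))}(q_{uv}-P_i(u)P_i(v))$, and then controls the covariance $q_{uv}-P_i(u)P_i(v)$ by factorising $P(\bar M_u\cap\bar M_v)$ and $(1-P_i(u))(1-P_i(v))$ over colours and telescoping the difference of products into $\sum_c p_{uv}^c + \sum_c p_u^c p_v^c$. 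Both arguments rely on the same structural fact --- that per-colour edge choices are independent, so the only source of positive correlation between $M_u$ and $M_v$ is a shared $uv$-edge, of which there are at most $\mu$ --- but your decomposition is tighter ($p_i^2+o(p_i^2)$ rather than $4p_i^2$) and makes it transparent exactly where the $\sqrt n/\log^2 n$ multiplicity hypothesis bites. The paper's version is shorter and sufficient for downstream use; yours would be preferable if a sharper constant were ever needed. One small thing worth noting explicitly in your write-up: the telescoping bound requires each factor to lie in $[0,1]$, which holds here because $p_u^c+p_v^c-p_{uv}^c$ is the probability that the colour-$c$ chosen edge touches $\{u,v\}$ and $(1-p_u^c)(1-p_v^c) \in [0,1]$; you implicitly use this but do not say it.
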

\begin{proof}
Note that by a union bound over the pairs of edges $(e_1,e_2)$ of colours in $C_{i+1}$ where $u \in e_1$ and $v \in e_2$, we have that $\mathbb{P}(u,v \text{ are both marked})$ is bounded above by $ \sum_{e_1,e_2} \mathbb{P}(e_1,e_2 \text{ are both chosen}) \leq \frac{(\max_{w} d_w^{C_{i+1}} (i))^2}{(\min_{c \in C_{i+1}} e^c(i))^2} + \frac{\mu}{\min_{c \in C_{i+1}} e^c(i)} \leq p_i^2 + \frac{\sqrt{n}}{t_2 n \log ^2 n } \leq p_i^2 + (\frac{\eps \gamma}{d(i\eps)})^2 \leq 2p_i^2$ if $n$ is large enough (here we are using Lemma 4, that $e^c(i) \geq t_2 n$ (which follows by $e_i \leq t_2 n$ as seen in the proof of Lemma 5), $d(i\eps) \in [1-\gamma,1]$ and $\eps \asymp \frac{1}{\log \log n}$). 
\

Moreover, by independence of the zapping, 
$\mathbb{P}($one of $u,v$ is condemned and the other is zapped$) \leq p_i(Q_i(u)+Q_i(v))  \leq 2p_i^2$. Now the claim follows by a union bound.
\end{proof}

We are now able to calculate the expectations of $L_c,T^{(1)}_c,T^{(2)}_c$. Indeed, by Lemma 8 and some simple use of the Inclusion-Exclusion principle, we can see that 
\begin{itemize}
    \item [i)] $|\mathbb{E}[L_c] - 2p_i b^c(i)| \leq 4p_i^2 b^c(i) \leq 8s^2_1 \eps^2 n$ 
    \item [ii)] $|\mathbb{E}[T^{(1)}_c] - 3p_i a^c(i)| \leq 16p_i^2 a^c(i) \leq \frac{64}{3}s^2_1 \eps^2 n$
    \item [iii)] $\mathbb{E}[T^{(2)}_c] \leq 12p_i^2 a^c(i) \leq 16s^2_1 \eps^2 n$
\end{itemize}
where we are using that $b^c(i) \leq \frac{n_c}{2} \leq 2n$ and $a^c(i) \leq \frac{n_c}{3} \leq \frac{4}{3}n$. Now we need prove  concentration for these random variables. 
\begin{lemma}
There exists a constant $r_1 = r_1(\sigma_1,\sigma_2) > 0$ such that for every $c \in \Phi_{i+1} \cup \cup_{j=i+2}^{\tau} C_j$, with probability at least $1-o(n^{-2})$ we have
$$|L_c - 2p_i b^c(i)| \leq r_1 \eps^2 n.$$
\end{lemma}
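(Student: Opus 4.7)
The plan is to combine the expectation estimate $|\mathbb{E}[L_c] - 2 p_i b^c(i)| \leq 8 s_1^2 \eps^2 n$ already established in item (i) above with a concentration bound via Azuma's inequality (Proposition 3). It will then suffice to show that $|L_c - \mathbb{E}[L_c]| \leq \eps^2 n$ with probability $1 - o(n^{-2})$, after which taking $r_1 := 1 + 8 s_1^2$ yields the lemma.

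First I would identify the independent random inputs on which $L_c$ depends. These are: the chosen edge $e_{c'}$ for each of the $|C_{i+1}| \leq \eps n$ colours $c' \in C_{i+1}$ (uniform over the edges of colour $c'$ in $G_i$), together with the zapping bits $Z_v \in \{0,1\}$ for each of the $n_c \leq 4n$ vertices $v$ in the colour class of $c$. The crucial observation is that the condemned status of a vertex $w$ depends only on the chosen edges (through its marked status) and on its own bit $Z_w$, so zapping bits outside the colour class of $c$ are irrelevant to $L_c$. Consequently $L_c$ is a function of $N = O(n)$ mutually independent variables.

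Next I would verify bounded differences with a uniform Lipschitz constant $c = 4$. Replacing $e_{c'}$ by another edge of colour $c'$ alters the marked status of at most four vertices (the two old endpoints and the two new ones); since the $K_2$ components of colour $c$ are pairwise vertex-disjoint, each such vertex lies in at most one of them, so at most four of the indicators $\mathbf{1}[\text{this } K_2 \text{ contains a condemned vertex}]$ toggle, and $L_c$ changes by at most $4$. Flipping a single $Z_v$ changes the condemned status of $v$ alone, which lies in at most one $K_2$ of colour $c$, so $L_c$ changes by at most $1$. Azuma's inequality then yields
\[
\mathbb{P}\bigl(|L_c - \mathbb{E}[L_c]| \geq \eps^2 n\bigr) \leq 2 \exp\!\left(-\frac{(\eps^2 n)^2}{32 N}\right) = 2 \exp\bigl(-\Omega(\eps^4 n)\bigr).
\]
Since $\eps \asymp 1/\log\log n$, this is $2\exp\bigl(-\Omega(n/(\log\log n)^4)\bigr) = o(n^{-2})$, which together with the expectation estimate completes the argument.

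The one genuine obstacle is the bookkeeping required to restrict $L_c$ to a function of $O(n)$ independent variables rather than $O(n^2)$: one has to notice that only the zapping bits of the $n_c \leq 4n$ vertices in the colour class of $c$ matter. Were we instead to apply Azuma over the full vertex set of $G$ (which can have $\Theta(n^2)$ vertices), the resulting bound would be far too weak to give the required $\eps^2 n$ deviation with $o(n^{-2})$ probability.
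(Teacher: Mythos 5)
Your approach is correct, and it is genuinely simpler than the paper's. The paper decomposes $L_c = L'_c + L''_c - L'''_c$ (counting $K_2$'s with a vertex having $Z_v=1$, with a marked vertex, and with both, respectively), bounds $L'_c$ by a Chernoff bound, $L''_c$ by Azuma over the chosen edges, and then handles $L'''_c$ by conditioning on the realization of the zapping set $Z$ and applying Azuma again. You instead observe that $L_c$ itself is a function of only $O(n)$ mutually independent inputs --- the $\eps n$ chosen edges plus the $n_c \leq 4n$ zapping bits of vertices lying in the colour class of $c$ --- with bounded differences (constant $4$ per chosen edge, $1$ per zapping bit), so a single application of Azuma's inequality gives $2\exp(-\Omega(\eps^4 n)) = o(n^{-2})$, and then you finish with the already-established expectation bound $|\mathbb{E}[L_c] - 2p_i b^c(i)| \leq 8s_1^2 \eps^2 n$. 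The key point you correctly identify is that one must restrict the Azuma application to the $O(n)$ relevant independent coordinates rather than to all $\Theta(n^2)$ vertices of $G$; once this is noticed, the direct route works. What the paper's decomposition buys is a framework that transfers verbatim to the proofs of the parallel Lemmas 10, 11 and 13, where (in Lemma 13) edge multiplicities up to $\mu = \sqrt{n}/\log^2 n$ inflate the Lipschitz constants and the $\sigma(Z)$-conditioning becomes more essential; but for Lemma 9 alone your direct argument is cleaner and fully valid.
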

\begin{proof}

First, recall that a vertex $v$ is condemned if it is marked or zapped. However, note that this is equivalent to saying that the vertex $v$ is marked or $Z_v = 1$.
\

Fix a colour $c \in \Phi_{i+1} \cup \cup_{j=i+2}^{\tau} C_j$. Let then $L'_c$ denote the number of $K_2$ components in the colour class of $c$ with a vertex $v$ such that $Z_v = 1$. Let $L''_c$ denote the number of $K_2$ components with a marked vertex and let $L'''_c$ denote the number of $K_2$ components with both a marked vertex and vertex $v$ with $Z_v = 1$. We can then write $L_c = L'_c + L''_c - L'''_c$.
\

Note first that $L'_c$ is sum of $b^c(i) \leq 2n$ independent Bernoulli random variables. Hence, by a Chernoff bound, 
$$\mathbb{P} (|L'_c - \mathbb{E}[L'_c]| > \sqrt{n} \log n) \leq 2\text{ exp}\left(-\frac{2n \log^2 n}{b^c(i)}\right) \leq \frac{2}{n^{\log n}}$$
Moreover, note that 
\begin{equation} \label{ineq:2}
\mathbb{E}[L'_c] \leq 2p_i b^c(i)
\end{equation}
since for every vertex $u$, $\mathbb{P}(Z_u = 1) = Q_i(v) \leq p_i$.
\

Secondly, we have that $L''_c$ is a function of the $\eps n$ chosen edges. Further, changing a chosen edge of a certain colour in $C_{i+1}$ will deviate $L''_c$ by at most $2$. Thus, by Azuma's inequality, for $y > 0$,
$$\mathbb{P}(|L''_c - \mathbb{E}[L''_c]| > \sqrt{\eps n} \log n) \leq \frac{2}{n^{\frac{\log n}{8}}} $$

Finally, we deal with $L'''_c$. First note that since the assignment of the random variables $Z_x$ is independent of the choice of edges in Step 1, we have that for a set $\{u,v\}$ of 2 vertices, 
$$\mathbb{P}(\{u,v\} \text{ has both a marked vertex and a vertex $w$ with $Z_w = 1$} ) \leq 4p_i^2$$
Therefore, 
\begin{equation} \label{ineq:3}
\mathbb{E}[L'''_c] \leq 4p_i^2 b^c(i)   
\end{equation}

Now, take the random set of vertices $Z := \{x: Z_x = 1\}$ and let $S$ be some (deterministic) set of vertices. Since $L'_c$ is $\sigma(Z)$-measurable, let $L'_c(S)$ denote the value $L'_c$ takes when $Z=S$. Then, $\mathbb{E} [L'''_c | Z = S] \leq 2p_i L'_c(S)$ (since each vertex is marked with probability at most $p_i$). Moreover, note that given $\{Z = S\}$, $L'''_c$ is a function of the $\eps n$ chosen edges and it deviates by at most 2 when we change one of these edges. Thus, by Azuma's inequality, 
\begin{equation} \label{ineq:4}
\mathbb{P}(|L'''_c - \mathbb{E}[L'''_c|Z = S]| > \sqrt{\eps n} \log n \text{ } |Z=S) \leq \frac{2}{n^{\frac{\log n}{8}}}
\end{equation}

We will now combine all of these bounds we have gotten so far. Define the events $A := \{|L'_c - \mathbb{E}[L'_c]| \leq \sqrt{n} \log n\}$, $B := \{|L''_c - \mathbb{E}[L''_c]| \leq \sqrt{\eps n} \log n\}$ and $C := \{L'''_c \leq \sqrt{\eps n} \log n + 2p_i (\mathbb{E}[L'_c] + \sqrt{n} \log n)\}$. By the discussion above, we have that $\mathbb{P}(A) \geq 1 - \frac{2}{n^{\log n}}$ and $\mathbb{P}(B) \geq 1 - \frac{2}{n^{\frac{\log n}{8}}}$. Also, note that when $\{Z = S\} \subseteq A$, we have that $\mathbb{E}[L'''_c|Z = S] \leq 2p_i L'_c(S) \leq 2p_i(\mathbb{E}[L'_c] + \sqrt{n} \log n)$. Thus, by our application of Azuma's inequality in (\ref{ineq:4}) above, we have $\mathbb{P}(C | A) \geq 1 - \frac{2}{n^{\frac{\log n}{8}}}$. 
Finally, this gives us that $\mathbb{P}(A \cap B \cap C) \geq  \mathbb{P}(B) - \mathbb{P}(\overline{A \cap C}) = \mathbb{P}(B) - (1 - \mathbb{P}(A) \mathbb{P}(C | A)) = 1-o(n^{-2})$ by the bounds we got.
\

Finally, note that using (\ref{ineq:2}) and (\ref{ineq:3}), we can see that the event $A \cap B \cap C$ implies $|L_c - \mathbb{E}[L_c]| \leq 4p_i^2 b^c(i) + |L_c - \mathbb{E}[L'_c + L''_c]| \leq 4p_i^2 b^c(i) + \sqrt{n} \log n + 2\sqrt{\eps n} \log n + 2p_i(2p_i b^c(i) + \sqrt{n} \log n) \leq 17s_1^2 \eps^2 n$ if $n$ is large enough.
\

Concluding, since we have the bound in i), we get that

$$|L_c - 2p_i b^c(i)| \leq |L_c - \mathbb{E}[L_c]| + 8s_1 \eps^2 n \leq 25s_1^2 \eps^2 n $$
The Lemma follows by then taking $r_1 := 25s_1^2$.

\end{proof}

\begin{lemma}
There exists a constant $r_2 = r_2(\sigma_1,\sigma_2) > 0$ such that for every $c \in \Phi_{i+1} \cup \cup_{j=i+2}^{\tau} C_j$, with probability $1-o(n^{-2})$ we have
$$|T^{(1)}_c - 3p_i a^c(i)| \leq r_2 \eps^2 n$$
\end{lemma}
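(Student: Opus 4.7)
The plan is to mimic the proof of Lemma 9, with $K_2$ components replaced by $K_3$ components. Using the observation (made in the proof of Lemma 9) that a vertex $v$ is condemned if and only if it is marked or $Z_v = 1$, I would decompose
$$T^{(1)}_c = T'_c + T''_c - T'''_c,$$
where $T'_c$ counts the $K_3$ components of colour $c$ containing at least one vertex $v$ with $Z_v = 1$, $T''_c$ counts those containing at least one marked vertex, and $T'''_c$ counts those containing both. Using $Q_i(v) \leq p_i$, $P_i(v) \leq p_i$ (Lemma 4) together with a union bound over the three vertices of each component, one obtains $\mathbb{E}[T'_c],\mathbb{E}[T''_c] \leq 3 p_i\, a^c(i)$ and, by the independence of the $Z_v$'s from the chosen edges, $\mathbb{E}[T'''_c] \leq 9 p_i^2\, a^c(i) = O(\eps^2 n)$. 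Combined with the expectation bound $|\mathbb{E}[T^{(1)}_c]-3p_i a^c(i)|\leq \tfrac{64}{3} s_1^2 \eps^2 n$ recorded in item (ii) before Lemma 9, the task is reduced to showing $|T^{(1)}_c-\mathbb{E}[T^{(1)}_c]|=O(\eps^2 n)$ with probability $1-o(n^{-2})$.

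For the three pieces, I would use exactly the tools applied in Lemma 9. Since the $Z_v$ are independent across vertices, $T'_c$ is a sum of $a^c(i)\leq 4n/3$ independent Bernoulli indicators, one per $K_3$ component, so Chernoff yields $|T'_c-\mathbb{E}[T'_c]|\leq \sqrt{n}\log n$ with probability $\geq 1-2/n^{\log n}$. The variable $T''_c$ is a function of the $\eps n$ chosen edges in Step 1; swapping one chosen edge can change the marked status of at most two vertices, which lie in at most two distinct $K_3$ components of colour $c$, so $T''_c$ is $2$-Lipschitz per coordinate and Azuma gives $|T''_c-\mathbb{E}[T''_c]|\leq \sqrt{\eps n}\log n$ with probability $\geq 1-2/n^{\log n /8}$.

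For $T'''_c$, condition on the zap-assignment $\{Z=S\}$, which is independent of the chosen edges. Writing $T'_c(S)$ for the (now deterministic) value of $T'_c$, we have $\mathbb{E}[T'''_c\mid Z=S]\leq 3p_i\, T'_c(S)$, since each of these $T'_c(S)$ eligible components contains a marked vertex with probability at most $3p_i$. Given $Z=S$, $T'''_c$ is again a $2$-Lipschitz function of the chosen edges, so conditional Azuma gives $|T'''_c-\mathbb{E}[T'''_c\mid Z=S]|\leq \sqrt{\eps n}\log n$ with probability $\geq 1-2/n^{\log n/8}$. Intersecting the three high-probability events exactly as in Lemma 9 (an event $A$ controlling $T'_c$, an event $B$ controlling $T''_c$, and an event $C$ controlling $T'''_c$ on $A$), and using $\mathbb{E}[T'_c]\leq 3p_i a^c(i)=O(\eps n)$ together with $p_i\leq s_1\eps$ to bound $\mathbb{E}[T'''_c\mid Z=S]=O(\eps^2 n)$ on $A$, yields $|T^{(1)}_c-\mathbb{E}[T^{(1)}_c]|=O(\eps^2 n)$ with probability $1-o(n^{-2})$. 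Choosing $r_2=r_2(\sigma_1,\sigma_2)$ large enough to absorb all the implicit constants gives the claim.

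There is no real obstacle, as the proof is a direct adaptation of Lemma 9 with union bounds of size $3$ (rather than $2$) and the trivial upper bound $a^c(i)\leq n_c/3\leq 4n/3$ replacing $b^c(i)\leq 2n$. The only point worth double-checking is the $2$-Lipschitz bound for $T''_c$ and $T'''_c$: even for $K_3$ components, swapping a single chosen edge in Step 1 re-marks at most two vertices, each lying in at most one $K_3$ of colour $c$, so the Lipschitz constant remains $2$ and the Azuma bounds go through unchanged.
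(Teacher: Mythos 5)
Your proposal is correct and follows exactly the approach the paper intends: the paper's proof of this lemma is literally a one-line pointer to Lemma 9 ("almost exactly the same"), and you have faithfully reconstructed the $K_3$ analogue with the same decomposition $T^{(1)}_c = T'_c + T''_c - T'''_c$, Chernoff for the zap-only piece, Azuma for the marked-only and mixed pieces (conditioning on $Z$ for the latter), and the same event-intersection to conclude. One small imprecision worth noting: replacing one chosen edge can change the marked status of up to four vertices (two from the old edge lose it, two from the new edge gain it), not two; nevertheless the net change in $T''_c$ (and in $T'''_c$ given $Z$) is still at most $2$ in absolute value since the at-most-two losses and at-most-two gains cannot both push the count in the same direction, so the $2$-Lipschitz bound and the resulting Azuma estimates stand.
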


\begin{proof}
The proof is almost exactly the same as the proof of Lemma 9 and so, we omit it.
\end{proof}

\begin{lemma}
There exists a constant $r_3 = r_3(\sigma_1,\sigma_2) > 0$ such that for every $c \in \Phi_{i+1} \cup \cup_{j=i+2}^{\tau} C_j$, with probability $1-o(n^{-2})$ we have
$$T^{(2)}_c \leq r_3 \eps^2 n$$
\end{lemma}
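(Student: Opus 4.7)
The plan is to combine the already-established mean bound $\mathbb{E}[T^{(2)}_c] \leq 12 p_i^2 a^c(i) \leq 16 s_1^2 \eps^2 n$ (item iii preceding Lemma 9) with a concentration inequality. Because the target bound $r_3\eps^2 n$ is of the same order as the expected value, a single application of Azuma's inequality will suffice, which is a notable simplification compared to Lemma 9, where the mean is of order $\eps n$ and required the careful marked/zapped decomposition into $L'_c, L''_c, L'''_c$; here I will not need to split $T^{(2)}_c$ at all.

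First I would view $T^{(2)}_c$ as a function of two independent sources of randomness: the $|C_{i+1}| \leq \eps n$ chosen edges picked in Step 1 and the Bernoulli bits $\{Z_v\}_{v \in V(G_i)}$ drawn in Step 3. Observing that the condemnation indicator of a vertex $v$ is determined by its marked status together with $Z_v$ (since killed implies marked, and condemned means marked or $Z_v = 1$ with $v$ not killed), the next step is to verify the bounded-differences constants. Resampling the chosen edge of a single colour alters the marked status of at most four vertices (at most two old endpoints, at most two new); since the colour class of $c$ is a disjoint union of cliques, each such vertex belongs to at most one $K_3$ of colour $c$, so $T^{(2)}_c$ moves by at most $4$. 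Flipping a single $Z_v$ changes $T^{(2)}_c$ by at most $1$, and by $0$ when $v$ does not lie in any $K_3$ of colour $c$. Hence only $|C_{i+1}| + 3a^c(i) \leq \eps n + 4n = O(n)$ coordinates carry positive Lipschitz weight (using $n_c \leq 4n$, so $a^c(i) \leq 4n/3$).

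Applying Azuma's inequality with Lipschitz constant $4$ over these $N = O(n)$ effective coordinates at deviation $t = \sqrt{n}\log n$ will yield
$$\mathbb{P}\!\left(T^{(2)}_c - \mathbb{E}[T^{(2)}_c] \geq \sqrt{n}\log n\right) \leq 2\exp\!\left(-\frac{n \log^2 n}{32 N}\right) = o(n^{-2}),$$
and since $\eps \asymp 1/\log\log n$ gives $\sqrt{n}\log n = o(\eps^2 n)$, combining with the mean bound delivers $T^{(2)}_c \leq r_3 \eps^2 n$ for a suitable $r_3 = r_3(\sigma_1,\sigma_2)$ (e.g.\ $r_3 = 16 s_1^2 + 1$). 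The only mildly delicate step — and the place I would double-check carefully — is the Lipschitz bound for a chosen edge, because changing one chosen edge can in principle alter the collision pattern of other chosen edges; however, the condemnation status of a vertex depends only on (marked status, $Z_v$) and not on whether it was actually killed, so collision-induced side effects do not inflate the Lipschitz constant. This is exactly where the disjoint-cliques structure of each colour class keeps the constant at $4$.
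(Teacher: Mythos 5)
Your proposal is correct, and it follows a genuinely different and simpler route than the paper's own proof. The paper decomposes $T^{(2)}_c = T^{(2)'}_c + T^{(2)''}_c$ (and introduces a helper $T^{(2)'''}_c$), applies a Chernoff bound to the purely $Z$-driven parts (sums of independent triangle-indicators), and handles the cross term via a conditional Azuma bound on the event $\{Z = S\}$, mirroring the $L'_c, L''_c, L'''_c$ machinery of Lemma 9. You instead observe the structural fact that ``condemned'' $\iff$ ``marked or $Z_v=1$'' (because killed implies marked), which makes $T^{(2)}_c$ a bounded-differences function of the two independent sources of randomness --- the $|C_{i+1}| \le \eps n$ chosen edges and the bits $Z_v$ --- and then a \emph{single} application of Azuma suffices. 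The two Lipschitz-constant checks and the restriction to the $O(n)$ effective coordinates (you correctly note that including all $\Theta(n^2)$ vertices' $Z$-bits would destroy the exponent) are exactly the non-trivial points, and you handle both: a chosen-edge swap moves at most $4$ vertices' marked status, each landing in at most one $K_3$ of colour $c$ by the disjoint-cliques structure; a single $Z_v$ flip moves one vertex's condemned status, again hitting at most one $K_3$. What your approach buys is a shorter, cleaner argument that sidesteps the conditional-expectation bookkeeping; what the paper's decomposition buys is a template that extends uniformly to Lemmas~9, 10 and 13 (where the same split is reused), so the authors' choice is more about presentation consistency than necessity. One small point worth stating explicitly if you were to write this up: the paper's Azuma statement asks for a common value set $S$ and a uniform constant $c$, so you should say you are applying it to $f$ viewed as a function of the $N = |C_{i+1}| + 3a^c(i) = O(n)$ effective coordinates only, with $c=4$ (the maximum of $4$ and $1$), which is legitimate because $T^{(2)}_c$ is measurable with respect to those coordinates alone. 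With that noted, the bound $\exp\bigl(-\tfrac{n\log^2 n}{32N}\bigr) = o(n^{-2})$ and the conclusion $T^{(2)}_c \le r_3\eps^2 n$ with $r_3 = 16s_1^2 + 1$ follow as you say.
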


\begin{proof}
Fix a colour $c \in \Phi_{i+1} \cup \cup_{j=i+2}^{\tau} C_j$. Note that by a similar reasoning as in the proof of Lemma 9, we can see that $$T_c^{(2)} = T_c^{(2)'} + T_c^{(2)''}$$
where $T_c^{(2)'}$ denotes the number of $K_3$ components in the colour class of $c$ with at least 2 distinct vertices $v,u$ such that $Z_v = Z_u = 1$ and $T_c^{(2)''}$ denotes the number of $K_3$ components with at most 1 vertex $v$ with $Z_v = 1$ and at least 2 condemned vertices. Let also, $T_c^{(2)'''}$ denote the number of $K_3$ components with exactly 1 vertex $v$ such that $Z_v = 1$.

Note first that $T_c^{(2)'}$ is sum of $a^c(i) \leq \frac{4}{3} n$ independent Bernoulli random variables and that $\mathbb{E}[T_c^{(2)'}] \leq 3p_i^2 a^c(i) \leq 4s_1^2 \eps^2 n$. Hence, by a Chernoff bound, 
$$\mathbb{P} (T_c^{(2)'} > \sqrt{n} \log n + 4s_1^2 \eps^2 n) \leq 2\text{ exp}(-\frac{2n \log^2 n}{a^c(i)}) \leq \frac{2}{n^{\log n}}$$

Secondly note that $\mathbb{E}[T_c^{(2)'''}] \leq 3p_i a^c(i) \leq 4s_1 \eps n$ and that $T_c^{(2)'''}$ is also a sum of $a^c(i) \leq \frac{4}{3} n$ independent Bernoulli random variables so that we also get
$$\mathbb{P} (T_c^{(2)'''} > \sqrt{n} \log n + 4s_1 \eps n) \leq \frac{2}{n^{\log n}}$$

Finally, take the random set of vertices $Z$ as defined in the proof of Lemma 9. Since $T_c^{(2)'''}$ is $\sigma(Z)$-measurable, let $T_c^{(2)'''}(S)$ denote its value when $Z = S$ for a (deterministic) set of vertices $S$. We note then that 
$$\mathbb{E}[T_c^{(2)''}|Z=S] \leq 2p_i T_c^{(2)'''}(S) + 6p_i^2 a^c(i) \leq 2s_1\eps(T_c^{(2)'''}(S) + 4 s_1 \eps n)$$
where we are using that the probability that a vertex is marked is at most $p_i$ and that, by the proof of Lemma 8, the probability that two distinct vertices are marked is at most $2p_i^2$. Moreover, given $Z=S$, $T_c^{(2)''}$ is a function of the $\eps n$ chosen edges and deviates by at most 2 when we change one of these edges. Thus, by Azuma's inequality,

$$\mathbb{P}(T_c^{(2)''} > \sqrt{\eps n} \log n + \mathbb{E}[T_c^{(2)''}|Z=S]|Z=S) \leq \frac{2}{n^{\frac{\log n}{8}}}$$

Joining these observations in a similar fashion as in the proof of Lemma 9, we get that with probability $1-o(n^{-2})$, $T_c^{(2)} = T_c^{(2)'} + T_c^{(2)''} \leq (\sqrt{n} \log n + 4s_1^2 \eps^2 n) + (\sqrt{\eps n} \log n + 2s_1\eps (\sqrt{n} \log n + 8s_1 \eps n)) \leq 21 s_1^2\eps^2 n$ if $n$ is large. Therefore, the Lemma follows by taking $r_3 := 21 s_1^2$.
\end{proof}

Note then that by (\ref{ineq:5}) before, we get for each colour $c \in \Phi
_{i+1} \cup \cup_{j=i+2}^{\tau} C_j$, with probability $1-o(n^{-2})$,
$$|e^c(i+\frac{1}{2}) - e_c(i)(1-2p_i)| \leq (r_1 + 2r_2 + r_3)\eps^2 n + 10|\Phi'_{i+1}|$$

We now deal with the degrees.

Let's first define for vertices $u,v$, $\mu^{C_j}(k) (uv)$ to be the edge multiplicity of $uv$ with respect to colours in $C_j$ after iteration $k$. Take then a vertex $v$ surviving steps 1,2 and 3 (of iteration $i+1$) and $j > i+1$. We note that $d_v^{C_j} (i+\frac{1}{2}) = d_v^{C_j} (i) - D^{v,j}_1 + D^{v,j}_2$, where 
\begin{itemize}
    \item $D^{v,j}_1 = \sum_{w \in \Gamma^{C_j}_v (i)} \mu^{C_j} (i) (vw) \text{ } 1_{w \text{ is condemned}}$
    \item $D^{v,j}_2 = \sum_{w \in \Gamma^{C_j}_v (i)} \mu^{C_j} (i) (vw) \text{ } 1_{w \text{ is condemned but not deleted}}$
\end{itemize}
We first look at $D^{v,j}_2$. Note that $D^{v,j}_2 \leq D^{v,j}_3$ where 
\begin{itemize}
    \item $D^{v,j}_3 = \sum_{w \in \Gamma^{C_j}_v (i)} \mu^{C_j} (i) (vw) \text{ } 1_{w \text{ is marked but not killed}}$
\end{itemize}
To calculate the expectation of this random variable we require the following simple Lemma:
\begin{lemma}
For a vertex $w$, the probability that it is marked but not killed is at most $2p_i^2$.
\end{lemma}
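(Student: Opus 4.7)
The plan is to bound the event ``marked but not killed'' by a union bound over pairs of distinct chosen edges. The key structural observation is that $w$ is marked but not killed precisely when $w$ is incident to at least one chosen edge \emph{and} every such chosen edge collides with some other chosen edge. This forces the existence of a pair of distinct chosen edges $e, e'$ with $e \ni w$ and $e \cap e' \neq \emptyset$; moreover $c(e) \neq c(e')$ automatically, since at most one edge is chosen per colour in $C_{i+1}$. Either $e'$ contains $w$ as well, or $e'$ shares only the other endpoint $w'$ of $e$.

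With this reduction in hand, I would proceed via a union bound. By independence of the choices across distinct colours, $\mathbb{P}(e, e' \text{ both chosen}) = \frac{1}{e^{c(e)}(i)\, e^{c(e')}(i)}$ when $c(e) \neq c(e')$, and this probability is zero otherwise. Letting all edges below range over those with colour in $C_{i+1}$, we obtain
$$\mathbb{P}(w \text{ marked but not killed}) \;\leq\; \sum_{e \ni w} \frac{1}{e^{c(e)}(i)} \sum_{e' \neq e,\, e' \cap e \neq \emptyset} \frac{1}{e^{c(e')}(i)}.$$
To estimate the two factors I would use the elementary bound
$$\sum_{e \ni v,\; c(e) \in C_{i+1}} \frac{1}{e^{c(e)}(i)} \;=\; \sum_{c \in C_{i+1}} \frac{d_v^c(i)}{e^c(i)} \;\leq\; \frac{d_v^{C_{i+1}}(i)}{\min_{c} e^c(i)} \;\leq\; p_i,$$
valid for every vertex $v$ by the chain of inequalities established in Lemma 4. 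Applied with $v = w$ this bounds the outer sum by $p_i$. For the inner sum, writing $e = ww'$ and noting that any edge $e'$ sharing a vertex with $e$ is incident to $w$ or to $w'$ gives
$$\sum_{e' \neq e,\, e' \cap e \neq \emptyset} \frac{1}{e^{c(e')}(i)} \;\leq\; \sum_{e' \ni w} \frac{1}{e^{c(e')}(i)} + \sum_{e' \ni w'} \frac{1}{e^{c(e')}(i)} \;\leq\; 2p_i,$$
uniformly in $e$. Multiplying the two estimates yields $\mathbb{P}(w \text{ marked but not killed}) \leq 2p_i^2$.

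I do not expect any real obstacle. Unlike the superficially similar Lemma 8 --- where the ``both marked'' event can be witnessed by a \emph{single} chosen edge $uv$, producing the extra $\mu/\min_c e^c(i)$ term --- the event here intrinsically requires two distinct chosen edges, so the whole argument is a short two-factor union bound relying only on the degree-to-colour-size inequality already proved in Lemma 4. The only care required is to note that same-colour pairs of distinct edges contribute zero to the probability, so that the independence factorisation used above is valid on the support of the sum.
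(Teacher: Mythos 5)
Your proof is correct and takes essentially the same route as the paper: a union bound over pairs of distinct chosen edges witnessing a collision at $w$, each pair contributing $\frac{1}{e^{c(e)}(i)\,e^{c(e')}(i)}$, with the total bounded by $2p_i^2$ via Lemma 4. The paper just phrases the split slightly differently — it names two explicit ``situations'' (two chosen edges at $w$, or a chosen path $wu,uy$ with $y\neq w$), each bounded by $p_i^2$ — which is exactly your outer-sum/inner-sum decomposition with the inner sum broken into edges at $w$ and edges at $w'$; your observation that the same-colour contributions vanish and that this event, unlike in Lemma 8, cannot be witnessed by a single chosen edge is also implicit in the paper's argument.
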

\begin{proof}
Note that $w$ is marked but not killed if and only if one of the following situations occurs: 
\begin{itemize}
    \item [1)] Two edges incident to $w$ are chosen;
    \item [2)] Two edges $wu,uy$ are chosen, where $y \neq w$.
\end{itemize}
The first situation has probability at most $\frac{d^{C_{i+1}}_w (i)^2}{(\min_{c \in C_{i+1}} e^c(i))^2} \leq p_i^2$ (by Lemma 4) of occurring, while the second situation has probability at most
$$\frac{d^{C_{i+1}}_w (i) \times \max_u d^{C_{i+1}}_u (i)}{(\min_{c \in C_{i+1}} e^c(i))^2} \leq p_i^2$$
which follows also by Lemma 4). Thus, the lemma follows.
\end{proof}

Equipped with Lemma 12, we can now say that $\mathbb{E}[D^{v,j}_3 (i)] \leq 2p_i^2 d_v^{C_j} (i) \leq 4s_1^2 \eps^3 n$, where we are using the trivial bound $d_v^{C_j} (i) \leq 2\eps n$ and Lemma 4. Moreover, note that $D^{v,j}_3$ is a function of the $\eps n$ chosen edges and deviates by at most $4 \mu$ when we change a chosen edge of a certain colour. Thus, by Azuma's inequality we have 
$$\mathbb{P}(|D^{v,j}_3 - \mathbb{E}[D^{v,j}_3]|>\mu \sqrt{\eps n} \log n) \leq \frac{2}{n^{\frac{\log n}{32}}}$$
and so, since $\mu \sqrt{\eps n} \log n = o(\eps^3 n)$, with probability $1-o(n^{-4})$ we have
$$D^{v,j}_2 \leq D^{v,j}_3 \leq (4s_1^2+1) \eps^3 n$$
Finally, we deal with $D^{v,j}_1$. We need the following Lemma.

\begin{lemma}
There exists a constant $r_4 = r_4(\sigma_1,\sigma_2)> 0$ such that for every vertex $v$ surviving steps 1,2 and 3 and $j > i + 1$, with probability $1-o(n^{-4})$ we have
$$|D^{v,j}_1 - p_i d_v^{C_j} (i)| \leq r_4 \eps^3 n$$
\end{lemma}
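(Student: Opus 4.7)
The plan is to compute $\mathbb{E}[D^{v,j}_1]$ exactly and then invoke Azuma's inequality, with the factor $\mu$ entering through the Lipschitz constant because each neighbour $w$ of $v$ contributes a weight $\mu^{C_j}(i)(vw)\le \mu$ to the sum.

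For the expectation, the key observation is that Steps~1 and~3 of the algorithm were designed precisely so that every vertex $w$ of $G_i$ is condemned with probability exactly $p_i$, irrespective of $w$. Hence by linearity,
$$\mathbb{E}[D^{v,j}_1] \;=\; \sum_{w \in \Gamma^{C_j}_v(i)} \mu^{C_j}(i)(vw)\,\mathbb{P}(w \text{ is condemned}) \;=\; p_i\, d_v^{C_j}(i).$$

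For concentration I realise $D^{v,j}_1$ as a function of the following mutually independent random variables: the $|C_{i+1}| = \eps n$ chosen edges from Step~1 (which determine the markedness of every vertex) together with the zapping bits $Z_w$ for $w \in \Gamma^{C_j}_v(i)$; zapping bits of vertices outside $\Gamma^{C_j}_v(i)$ do not affect $D^{v,j}_1$ and may be discarded. Since each colour class is a disjoint union of cliques of size at most three and $|C_j| = \eps n$, we have $|\Gamma^{C_j}_v(i)| \le 2\eps n$, so the total number of relevant coordinates is $N \le 3\eps n$. Swapping one chosen edge changes the markedness of at most four vertices, each contributing at most $\mu$ to the sum, so $D^{v,j}_1$ shifts by at most $4\mu$; flipping one bit $Z_w$ shifts $D^{v,j}_1$ by at most $\mu$. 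Applying Azuma's inequality with $c = 4\mu$ and $t = \eps^3 n$ and substituting $\mu = \sqrt{n}/\log^2 n$ yields
$$\mathbb{P}\bigl(|D^{v,j}_1 - p_i\, d_v^{C_j}(i)| > \eps^3 n\bigr) \;\le\; 2\exp\!\left(-\frac{(\eps^3 n)^2}{2\,(4\mu)^2\cdot 3\eps n}\right) \;=\; 2\exp\!\left(-\frac{\eps^5 \log^4 n}{96}\right).$$
Using $\eps \ge 1/(2\log\log n)$, the exponent is $\Omega\!\left(\log^4 n/(\log\log n)^5\right) = \omega(\log n)$, so the probability is $o(n^{-4})$, and Lemma~13 holds with $r_4 = 1$.

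The only subtlety worth flagging is the bookkeeping step of restricting to the at most $2\eps n$ relevant zapping bits rather than all $\Theta(n^2)$ of them: if one naively included every $Z_w$ as a coordinate, the denominator in the Azuma exponent would blow up by a factor of $n$ and the bound would be far too weak. Once this reduction is made, the multiplicity bound $\mu \le \sqrt{n}/\log^2 n$ is exactly what is needed to make $\mu^2 \cdot \eps n$ small enough that deviations on the scale $\eps^3 n$ are highly concentrated.
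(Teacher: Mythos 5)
Your proof is correct, but it takes a genuinely different, cleaner route than the one the paper indicates. The paper's Lemma~13 ``follows exactly the same reasoning as the proofs of Lemmas~9 and~10'': it decomposes $D^{v,j}_1$ by inclusion--exclusion into a zap-only term, a marking-only term, and a cross term, applying Chernoff to the first (a weighted sum of independent Bernoullis), Azuma over the chosen edges to the second, and a conditional Azuma argument to the third, then recombines. You sidestep the decomposition entirely by realising $D^{v,j}_1$ as a function of one small family of mutually independent coordinates --- the $\eps n$ chosen edges of Step~1 together with the at most $2\eps n$ zap bits $Z_w$ for $w$ in the $C_j$-neighbourhood of $v$ --- with bounded-differences constant $4\mu$, so a single Azuma/McDiarmid application finishes. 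The two observations that make this go through are exactly the ones you flag: restricting the zap coordinates to the $O(\eps n)$ relevant ones rather than all $\Theta(n^2)$ vertices (without which the denominator in the exponent would be off by roughly a factor of $n$), and the exact identity $\mathbb{E}[D^{v,j}_1] = p_i\,d_v^{C_j}(i)$, which holds because $Q_i$ is constructed precisely so that every vertex is condemned with probability $p_i$ --- so the $O(p_i^2)$ inclusion--exclusion error terms that appear in Lemma~9 are simply absent here. Taking $c = 4\mu$ uniformly is slightly wasteful on the zap-bit coordinates, which move the sum by only $\mu$, but this costs a constant factor in the exponent and the resulting bound $\exp\bigl(-\Omega(\eps^5\log^4 n)\bigr) = o(n^{-4})$ is still correct. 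Your unified argument is the simpler of the two, and it would in fact also streamline the proofs of Lemmas~9 and~10 (there one needs an extra triangle-inequality step, since $\mathbb{E}[L_c]$ is only $2p_i b^c(i) + O(\eps^2 n)$ rather than exact).
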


\begin{proof}
The proof follows exactly the same reasoning as the proofs of Lemmas 9 and 10 and further, exploits the fact that $\mu = \frac{\sqrt{n}}{\log ^2 n}$.
\end{proof}
\

Recall that $d_v^{C_j} (i+\frac{1}{2}) = d_v^{C_j} (i) - D^{v,j}_1 + D^{v,j}_2$. But then, by this and Lemma 13, we can now see that for every vertex $v$ surviving steps 1,2 and 3 and $j > i + 1$, with probability $1-o(n^{-4})$ we have $$d_v^{C_j} (i+\frac{1}{2}) \leq d_v^{C_j}(i)(1-p_i) + (4s_1^2 + r_4 + 1)\eps^3 n$$ 
\

To finish off the proof of Lemma 6, note that we have at most $n$ colours in $\Phi_{i+1} \cup \cup_{j=i+2}^{\tau} C_j$ and at most $\frac{4n^2}{\eps} = O(n^3)$ pairs $(v,j)$ where $\tau \geq j > i +1$ and $v$ is a vertex. We then get, by setting $s_3 := \max\{10,r_1+2r_2+r_3,4s_1^2 + r_4 + 1\}$, that with probability $1-o(1)$, 
\begin{itemize}
    \item $|e^c(i+\frac{1}{2}) - e^c(i)(1-2p_i)| \leq s_3(\eps^2 n + |\Phi_{i+1}|)$ for every colour $c \in \Phi_{i+1} \cup \cup_{j=i+2}^{\tau} C_j$,
    \item $d_v^{C_j} (i+\frac{1}{2}) \leq d_v^{C_j} (i)(1-p_i) + s_3 \eps^3 n$ for every vertex $v$ surviving step 3 and $j > i+1$.
\end{itemize}

\end{proof}

\end{document}